\documentclass{article}

\usepackage{amsmath}
\usepackage{amssymb}
\usepackage{amsthm}
\usepackage{bm}
\usepackage{enumerate}
\usepackage{enumitem}
\usepackage{esint}
\usepackage[T1]{fontenc}
\usepackage[margin=0.75in]{geometry}
\usepackage{graphicx}
\usepackage[utf8]{inputenc}
\usepackage{mathtools}
\usepackage{subcaption}
\newtheorem{theorem}{Theorem}[section]

\newtheorem{proposition}[theorem]{Proposition}

\newcommand{\grad}[1]{\nabla#1}

\newcommand{\norm}[2][]{\bigl\|#2\bigr\|_{#1}}

\newcommand{\vs}[1]{\mathcal{#1}}
\usepackage{enumitem}
\usepackage{verbatim}
\usepackage{scalerel}

\usepackage{xcolor}
\usepackage{hyperref}

\numberwithin{equation}{section}

\title{Global solutions to the Boussinesq equations and the planetary geostrophic approximation}
\author{Ryan Budahazy\footnote{\href{mailto:rbudahazy@tamu.edu}{rbudahazy@tamu.edu}, Department of Mathematics, Texas A\&M University, College Station, TX--77843}, \qquad Xin Liu\footnote{\href{mailto:xliu23@tamu.edu}{xliu23@tamu.edu}, Department of Mathematics, Texas A\&M University, College Station, TX--77843}}
\date{\today}

\begin{document}
%\doublespacing

\maketitle

\begin{abstract}
    We investigate the three dimensional Boussinesq equations in the planetary geostrophic scale, where the buoyancy/gravity driven flow is near geostrophic balance in the large horizontal spatial scale. By establishing the uniform (with respect to the characteristic number) estimates, we are able to obtain the global weak solutions and the unique global strong solutions to the Boussinesq equations without restriction on the initial data. Furthermore, sending the characteristic number to zero with the established compactness rigorously justifies the planetary geostrophic approximation of the Boussinesq equations. We employ the novel buoyancy flow-free flow decomposition to overcome the singularity caused by the buoyancy/gravity. 

    {\noindent\bf Keywords:} Planetary geostrophic approximation, singular limit, global regularity, buoyancy driven flow. 

    {\noindent\bf MSC2020:} 35A01, 35A02, 35A35, 35B25, 35Q86, 76D03, 76M45.
\end{abstract}

\section{Introduction}

\subsection{The Boussinesq system and the planetary geostrophic approximation}
\label{subsec:intro_pge_system}

Let $ (v,w) = (v,w)(x,y,z,t):\mathbb T^3 \times \mathbb R^+ \mapsto \mathbb R^2 \times \mathbb R $ be the horizontal and vertical velocity fields, and $ (T, p) = (T, p)(x,y,z,t): \mathbb T^3 \times \mathbb R^+ \mapsto \mathbb R\times\mathbb{R} $ be the temperature and pressure potential fields. Then the {\it non-dimensional Boussinesq equations} in the planetary geostrophic scale can be written as, for $ \varepsilon \in (0,1) $,
% The goal of this paper is to investigate the asymptotic limit $\varepsilon\to 0^+$ of solutions to the following non-dimensionalized \textit{Boussinesq equations}:
\begin{equation*}
    \tag{BE}
    \label{sys:BE}
    \begin{cases}
        \partial_t v+v\cdot\grad_h{v}+w\partial_z v+\frac{1}{\varepsilon^{\alpha}}(\grad_h{p}+f v^\perp-\Delta v)=0,                                  \\
        \partial_t w+v\cdot\grad_h{w}+w\partial_z w+\frac{1}{\varepsilon^{\alpha+2}}(\partial_z p+T)-\frac{1}{\varepsilon^{\alpha}}\Delta w=0, \\
        \grad_h{}\cdot v+\partial_z w=0,                                                                                                                       \\
        \partial_t T+v\cdot\grad_h{T}+w\partial_z T-\Delta T=Q,                                                                                \\
    \end{cases}
\end{equation*}
where $\nabla_h=(\partial_x,\partial_y)$ is the differential operator in the horizontal $(x,y)$-variable. The parameters $\varepsilon$ and $\alpha$ represent the scaling parameters for the characteristic scales; see section \ref{subsec:non-dimensionalization}, below, for more details. System \eqref{sys:BE} is used to model atmospheric flow near geostrophic balance, with large horizontal scale. Here $Q = Q(x,y,z,t): \mathbb T^3\times\mathbb R^+ \rightarrow \mathbb R$ is the given heat source for the flow, and $f$ represents the Coriolis effect, which will be taken to be $1$ in the rest of this work; see \eqref{def:f}, below. Our goal is to investigate the asymptotic dynamics of system \eqref{sys:BE} as $\varepsilon \rightarrow 0$.

Formally, sending $\varepsilon \rightarrow 0$ in system \eqref{sys:BE} leads to the \textit{planetary geostrophic equations}, given by
\begin{equation*}
    \tag{PGE}
    \label{sys:PGE}
    \begin{cases}
        \grad_h{p_p}+f v_p^\perp-\Delta v_p=0,                                  \\
        \partial_z p_p+T_p=0,                                                                \\
        \grad{}_h\cdot v_p+\partial_z w_p=0,                                                  & \\
        \partial_t T_p+v_p\cdot\grad_h{T_p}+w_p\partial_z T_p-\Delta T_p=Q, & \\
    \end{cases}
\end{equation*}
where the global well-posedness theory follows from the arguments in \cite{cao_global_2003}.
% The spacetime domain of interest is $\mathbb{T}^3\times [0,\vs{T}]$. We also require that $\alpha\in (0,2)$.
However, from \eqref{sys:BE}, one can see that the buoyancy/gravity becomes singular as $\varepsilon \rightarrow 0$ (see \eqref{eq:singular-structure}, below), providing strong instability, while the viscosity provides strong stability. The main challenge of this work is to find the balance between these two effects. 

In addition, to simplify the presentation, we assume that
\begin{equation}
    \label{asmp:homogeneous-variables}
    \int_{\mathbb{T}^3} v\,d\mathbf{x} = \int_{\mathbb{T}^3} w\,d\mathbf{x} = \int_{\mathbb{T}^3} T\,d\mathbf{x}  = \int_{\mathbb{T}^3} Q\,d\mathbf{x} = 0,
\end{equation}
and
\begin{center}
    $ v, p $ are even functions with respective to the $z$-variable, \\ and $ w, T $ are odd with respective to the $ z $-variable, 
\end{center}
    i.e.,
    \begin{equation}
        \label{asmp:symetry}
        v(-z) = v(z), \ p(-z) = p(z), \ w(-z) = - w(z),\ T(-z) = - T(z).
    \end{equation}
The direct consequence of \eqref{asmp:symetry} is that
\begin{equation}
    \label{asmp:bc}
    w\vert_{z \in \mathbb Z} = T\vert_{z \in \mathbb Z} = \partial_z v\vert_{z \in \mathbb Z} = 0. 
\end{equation}
Assumptions \eqref{asmp:homogeneous-variables}--\eqref{asmp:symetry} are invariant; that is, the same assumptions hold true for all time provided that they initially hold. 
Similar assumptions hold for $ (v_p,w_p,p_p,T_p) $. 
In particular, by restricting the solutions of \eqref{sys:BE}, \eqref{sys:PGE} in $ \lbrace 0 \leq z \leq 1 \rbrace $, one will obtain the solutions in the channel $ \mathbb T^2 \times (0,1) $, satisfying the boundary condition \eqref{asmp:bc}.

\smallskip 

% Background

% paragraph 1 -- Boussinesq equations (Newton's second law and the first law of thermodynamics for incompressible flows): why BE is important (physically), f-plane/Coriolis approximation, temperature etc.

The Boussinesq equations \eqref{sys:BE}, describing the interactive dynamics in the ocean and atmosphere, constitute the local balance of horizontal and vertical momentums, the conservation of mass for incompressible flow (incompressibility), and the first law of thermodynamics, respectively. Rotation $fv^\perp$ in $\eqref{sys:BE}_1$ is driven by Earth's Coriolis force, where $f$ is typically approximated to zeroth ($f=f_0$) or first order ($f=f_0+\beta y$), respectively referred to as the \textit{$f$-plane} and \textit{$\beta$-plane approximations} \cite{pedlosky_geophysical_1987}. Moving forward, we only consider the former by setting
\begin{equation}\label{def:f}
    f\equiv 1.
\end{equation}
The system \eqref{sys:BE} is considerably simpler than its compressible counterpart: the only contribution of density is in the buoyancy/gravity term $T\sim \rho g$ of $\eqref{sys:BE}_2$ (derived from the linear-in-$T$ equation of state and a rescaling of the temperature). Such simplification is called the \textit{Boussinesq approximation} \cite{lions_equations_1992}. In addition to $\mathbb{T}^3$, we also discuss results in the domain $\Omega:=M\times (-h,0)$, $0<h\ll 1$ constant, where, unless otherwise stated, $M\subset\mathbb{R}^2$ is smooth and $\Omega$ is equipped with the boundary conditions given in \cite{cao_global_2003}. In particular, we would like to highlight the wind-driven condition at the surface and the no heat-flux condition at the ocean floor because they play a central role in the thermodynamics of system \eqref{sys:BE}. Since the density stratification is gravitationally stable \cite{pedlosky_geophysical_1987}, the cooling of the top layer of fluid by atmospheric wind and the heating of seawater from below by Earth's surface cause hotter, lighter fluid to rise throughout the system, that is, convection occurs. The related B\'enard (wind-driven) and Rayleigh-B\'enard (heating from below) problems describe the convection more precisely, see \cite{dijkstra_dynamic_2013, fabrie_solutions_1986, fabrie_solutions_1989, foias_attractors_1987, ly_global_1999} and \cite{getling_rayleigh-benard_1998, ma_dynamic_2004, ma_rayleigh_2007, sengul_pattern_2013}, respectively.

\smallskip

% paragraph 2 -- similarity to Navier-Stokes; why is it challenging? mathematically: known results, weak solution, well-posedness is only local

Classical existence theory of the Navier-Stokes equations in $\mathbb{T}^3$ (see for instance Constantin and Foias \cite{constantin_navier-stokes_1989}) establishes the existence of global weak solutions, and the existence and uniqueness of local strong solutions for arbitrary initial data $v_0,w_0$. Uniqueness of the weak solutions is withheld due to the nonlinear advection terms preventing sufficient regularity of the time derivatives $\partial_t v,\partial _t w$. The first three equations in system \eqref{sys:BE} resemble the Navier-Stokes system, with contribution of the rotation and buoyancy/gravity. However, the effect of rotation does not create trouble in the existence theory when $f$ is constant. The added thermodynamics $\eqref{sys:BE}_4$ take the same form as the momentum equations of Navier-Stokes, meaning the temperature regularity is at least that of the velocity, so the buoyancy/gravity term in $\eqref{sys:BE}_2$ is also a nonissue in showing the existence of solutions. In fact, when $T_0\in L^\infty(\mathbb{T}^3)$, the temperature satisfies a maximum principle which can be used to show the uniqueness of the weak temperature, see Proposition \eqref{temp-est}. Therefore, with periodic boundary conditions and constant Coriolis force, the local well-posedness theory of \eqref{sys:BE} is nearly identical to that of the Navier-Stokes equations, with the existing time depending on $ \varepsilon $. The main obstacle in this work is to obtain a uniform-in-$\varepsilon $ existence theory. 

\smallskip

% paragraph 3 -- looking for simplified modeld from asymptotic expansion: hydrostatic -> PE; quasi-geostrophic approximation

We now turn our attention to some important simplified models of the Boussinesq equations. The \textit{primitive equations} (PE) are obtained under the \textit{hydrostatic approximation}. When the Froude number $\text{Fr}$ is equal to the aspect ratio $\iota$ and the other non-dimensional parameters are $\vs{O}(1)$ in \eqref{sys:be_nndmt}, the flow is near hydrostatic balance since $\iota\ll 1$, meaning it is physically justifiable to replace $\eqref{sys:be_nndmt}_2$ with $\eqref{sys:PGE}_2$ \cite{lions_equations_1992}. The \textit{quasi-geostrophic equations} (QGE) are derived for flow near %hydrostatic {and} 
geostrophic balance, where the pressure, gravity, and Coriolis force are in balance \cite{majda_introduction_2003}. The scaling from \eqref{sys:be_nndmt} is given by $\text{Fr}^2=\iota^2\text{Ro}=\iota^2\text{Ma}^2\ll 1$ and the other non-dimensional parameters are $\vs{O}(1)$. For a full derivation of the QGE (including the potential vorticity form) from the Boussinesq equations, see \cite{desjardins_derivation_1998, bardos_derivation_2024,majda_introduction_2003,bourgeois_validity_1994,embid_low_1998}.

\smallskip

% paragraphs 4 and 5 -- Mathematical analysis of PE and QG; well-posed

The existence of global weak solutions to the primitive equations in $\Omega$ was done by Lions in \cite{lions_equations_1992}, with no-slip conditions on the lateral and bottom boundaries and prescribed temperature at the ocean floor. With the same boundary conditions except no heat flux on the bottom boundary, Petcu et al. \cite{petcu_mathematical_2009} established the existence and uniqueness of local strong solutions in $\Omega$. Cao and Titi \cite{cao_global_2007} then showed the existence and uniqueness of global strong solutions in $\Omega$. See also \cite{kobelkov_existence_2006,CRMATH_2007__345_5_257_0,kukavica_regularity_2007,Cao2016a,Cao2014b,Hieber2016}.

As for the quasi-geostrophic equations, Puel and Vasseur \cite{puelGlobalWeakSolutions2015} established the existence of global weak solutions in $\mathbb{R}^2\times (0,\infty)$ with the transport boundary condition. In periodic channel domains $\mathbb{T}^2\times (0,h)$, Bennett and Kloeden \cite{bennett_periodic_1982} showed the local existence and uniqueness of strong solutions with homogeneous transport boundary conditions, and Bourgeois and Beale \cite{bourgeois_validity_1994} proved the existence of global classical solutions with homogeneous Neumann conditions. The non-uniqueness of weak solutions in $\mathbb{T}^2\times (0,2\pi)$ was later shown by Novack \cite{novack_nonuniqueness_2020}. With Ekman pumping, Desjardins and Grenier \cite{desjardins_derivation_1998} established the existence of global weak solutions in $\mathbb{T}^2\times (0,1)$, and Novack \cite{novackGlobalTimeClassical2018} established the existence and uniqueness of the global classical solution in $\mathbb{R}^2\times (0,\infty)$.

\smallskip

% paragraph 6 -- PGE is another approximation under the physical assumptions ...; Math, Cao and titi etc.

Another important simplification of the Boussinesq equations, when using the scale mentioned in section \ref{subsec:non-dimensionalization} below, is given by the planetary geostrophic equations \eqref{sys:PGE}. The planetary geostrophic equations combine the hydrostatic approximation and the geostrophic balance, where the pressure, buoyancy/gravity, Coriolis force, and viscosity are in balance in a thin domain. In \cite{samelson_mathematical_1998,samelson_remarks_2000}, the authors showed the existence of global weak solutions to \eqref{sys:PGE} in $\Omega$, and the existence and uniqueness of global strong solutions when the initial data $T_0\in L^\infty(\Omega)$ or $T_0\in H^2(\Omega)$. Cao and Titi \cite{cao_global_2003} later fully resolved the well-posedness theory by showing the uniqueness of weak solutions, and the existence and uniqueness of global strong solutions when $T_0\in H^1(\Omega)$. In all of the above results, $M$ is allowed to be a square. The well-posedness theory in $\mathbb{T}^3$ of the three systems, PE, QGE, and \eqref{sys:PGE}, follows by neglecting the boundary conditions and contributions.

\smallskip

% paragraph 7 -- Asymptotic limit: Hilbert's 6th problem (justify math model from first principal); BE is the first principal; PE is the .. limit of BE (literature); QG is the ... limit of the BE (literature, majda and embid etc.); AND Well-prepared and ill-prepared data: well-prepared data relying in formal asymptotics (Bresh) and the relative entropy method (https://www.ams.org/journals/qam/2023-81-03/S0033-569X-2023-01667-1/ and Dafermos's paper, Feireisl) ;  ill-prepared data relies on disspersive estimate (Ukai) or fast-slow decomposition (QG paper by Bardos, Liu, Titi)

With the existence theory, it is then important to justify the approximation by investigating the appropriate singular limits of the Boussinesq equations, which yield the primitive and quasi-geostrophic equations. The PE are the small aspect ratio (or small Froude number) limit of \eqref{sys:boussinesq}. Without rotation, gravity, and thermodynamics, the same is true of the PE as the limit of the Navier-Stokes equations. In $\Omega$, where $M$ is allowed to be Lipschitz, with instead no-slip conditions on the bottom boundary, Az\'erad and Guill\'en \cite{azerad_mathematical_2001} showed the convergence of a weak solution of the NSE to a weak solution of the PE. In $\mathbb{T}^3$, Li and Titi \cite{li_primitive_2019} established the convergence of the local strong solution of the NSE to the global strong solution of the PE, and Liu and Titi \cite{liu_rigorous_2024} showed the convergence of local solutions of the compressible NSE to local solutions of the compressible PE in stronger $H^s$ norms. Furthermore, Pu and Zhou \cite{pu_rigorous_2023} established the convergence of the local strong solution of the Boussinesq equations to the global strong solution of the PE in $\mathbb{T}^3$, thereby justifying the hydrostatic approximation. The QGE are more complex. When the initial data of \eqref{sys:boussinesq} is well-prepared, i.e., asymptotically close to hydrostatic and geostrophic balance, the QGE are the small Froude, small Rossby number limit of the Boussinesq equations, see \cite{embid_low_1998}. Bourgeois and Beale \cite{bourgeois_validity_1994} showed in $\mathbb{T}^2\times (0,h)$ with impermeable boundary conditions the convergence of local $C_tH_\mathbf{x}^5$ solutions of the Boussinesq equations with well-prepared (and smooth enough) initial data to local $C_tH_\mathbf{x}^6$ solutions of the QGE. See also \cite{bresch_rotating_2004}. The method of relative entropy is also used, see \cite{feireisl_rotating_2014,vasseur_review_2023}. When the initial data instead contains leading order fast gravity waves, in the whole space $\mathbb{R}^3$ dispersive and Strichartz-type estimates are available to show that the QGE are the rigorous limit of the Boussinesq equations, see \cite{charve_enhanced_2020,charve_sharper_2023}. In bounded domains, there is no way for the fast dynamics to disperse or ``escape,'' and the standard QGE, as it turns out, is not the limit of the Boussinesq equations, see Bardos et al. \cite{bardos_derivation_2024} for the correct limiting system in $\mathbb{T}^2\times (0,h)$ with impermeable boundary conditions. With Ekman pumping, Desjardins and Grenier \cite{desjardins_derivation_1998} showed in $\mathbb{T}^2\times (0,1)$ the convergence of global weak solutions of the Boussinesq equations with well-prepared (and smooth enough) initial data to local solutions of the QGE which are also smooth enough. There is one other work we would like to mention: Bresch et al. \cite{bresch_derivation_2006} established in $\mathbb{T}^3$ the convergence of local classical solutions of the PE with the well-prepared initial data $v_0,T_0=0$ to local classical solutions of a similar system to \eqref{sys:PGE}.

\smallskip

% paragraph 8 -- Our goal is to show PGE is the limit of BE. PE to PGE by Bo You (ill-prepared data, strong solution); idea of proof => BE = Buoyancy flow (geostrophic balance, no temperature) + free flow; method =  direct singular limit method, i.e., no need of relative entropy method, dispersive estimate, fast-slow wave decomposition; weak sol of BE converges to weak sol of PGE (weak sol of PGE is unique); strong sol (H^1); global and uniform in time. AND (after verify H^1) In addition, our result constructs a large class of unique global solution ($H^1$) to BE for any fixed $\varepsilon$.

The goal of this work is to justify the planetary geostrophic approximation \eqref{sys:PGE} of the Boussinesq equations \eqref{sys:BE}. We are able to establish {\it uniform-in-$\varepsilon$} estimates for solutions to system \eqref{sys:BE} without restriction on the size of the initial data; that is, our initial data is {\it general} and does not need to be well-prepared. The uniform estimates hold for all time and all sufficiently small $ \varepsilon $. In particular, we establish the global weak solution to system \eqref{sys:BE} with uniform estimate (see theorem \ref{thm:uniform-weak-sol}, below). With the uniform estimate, we are able to pass the asymptotic limit $ \varepsilon \rightarrow 0 $, and verify the planetary geostrophic approximation (see theorem \ref{thm:pge-limit}, below). Notably, while the three dimensional Boussinesq equations in general does not admit unique global weak solution \cite{noauthor_instability_2024,de_lellis_dissipative_2013,lellis_dissipative_2014}, the limit planetary geostrophic equations does admit unique global weak solution \cite{cao_global_2003}. Our result shows that, despite the possibility of non-unique weak solutions to system \eqref{sys:BE}, the asymptotic limit as $ \varepsilon\to 0 $ will always be dominated by system \eqref{sys:PGE}. Furthermore, capitalizing on the limit dynamics, we establish global unique strong solution to the Boussinesq equations \eqref{sys:BE} for general initial data but small $ \varepsilon $. This constructs a {\it large, unique solution} to the three dimensional Boussinesq equations (see theorem \ref{thm:global-strong}, below).

\smallskip

% paragraph 9 -- Summarizing the rest of the paper

The rest of this paper is organized as follows: In section \ref{subsec:results}, we summarize the main results. In section \ref{subsec:non-dimensionalization}, we formally write down the non-dimensionalization for readers' reference. In section \ref{sec:strategy}, we introduce the strategy of the proof, including the buoyancy flow-free flow decomposition to overcome the main difficulty caused by the singular buoyancy. In section \ref{sec:well-posedness}, we comment about the well-posedness of the Boussinesq equations and the buoyancy flow-free flow decomposition. In addition, we establish the basic estimates of the buoyancy flow. Section \ref{sec:est-freeflow} is devoted to the proof of global uniform-in-$\varepsilon $ estimates. The compactness and the planetary geostrophic approximation are established in section \ref{sec:proof_of_thm}. Finally, section \ref{sec:H-1-sol} establishes improved regularity and uniqueness for strong solutions.

\subsection{Main results}
\label{subsec:results}

\begin{theorem}[Global uniform stability]
    \label{thm:uniform-weak-sol} 
    Consider the initial data $ (v,w,T)\vert_{t=0} = (v_0,w_0,T_0) \in L^2(\mathbb T^3) $ and $ Q \in L^1(0,\infty;L^\infty(\mathbb T^3)) \cap L^2(0,\infty;L^2(\mathbb T^3))$. Suppose that in addition, $ T_0 \in L^\infty(\mathbb T^3)$. Then there exists $ 0< \varepsilon_0 < 1 $, depending on the initial data and $ Q $, such that for all $ \varepsilon < \varepsilon_0 $, there exists a global weak solution to system \eqref{sys:BE} with $ 0 < \alpha < 2 $, satisfying \eqref{asmp:homogeneous-variables}--\eqref{asmp:bc}, and 
    \begin{equation}
        \label{thmest:weak-global-sol}
        \sup_{0\leq t < \infty} \bigl\lbrace \norm[2]{v(t),\varepsilon w(t),T(t)}^2 + \norm[\infty]{T(t)}^2 \bigr\rbrace + \int_0^\infty \bigl\lbrace \norm[H^1]{v(t),\varepsilon w(t),T(t)}^2 + \norm[2]{w(t)}^2 \bigr\rbrace \,dt < \infty,
    \end{equation}
    uniformly-in-$ \varepsilon $. 
\end{theorem}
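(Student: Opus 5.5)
The plan is to derive the a priori bound \eqref{thmest:weak-global-sol} for smooth (Galerkin) approximations at fixed $\varepsilon$, uniformly in $\varepsilon$, and then pass to the limit in the Galerkin parameter by standard compactness, as for the Navier--Stokes equations.

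\textbf{Temperature.} I start with the temperature, which decouples at the level of a priori bounds. The maximum principle (Proposition \ref{temp-est}) gives
\[
\norm[\infty]{T(t)} \leq \norm[\infty]{T_0} + \int_0^\infty \norm[\infty]{Q}\,dt .
\]
Testing $\eqref{sys:BE}_4$ with $T$, using incompressibility, mean zero and Poincaré's inequality gives $\sup_t\norm[2]{T}^2+\int_0^\infty\norm[H^1]{T}^2\,dt\le C(T_0,Q)$. None of these bounds involve $\varepsilon$.

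\textbf{Why the plain energy estimate fails.} Testing $\eqref{sys:BE}_1$ with $v$ and $\varepsilon^2\times\eqref{sys:BE}_2$ with $w$, the pressure and Coriolis terms cancel. The remaining coupling is $-\varepsilon^{-\alpha}\int Tw$. Since $w$ is odd in $z$ and $\partial_z w=-\nabla_h\cdot v$, we have $\norm[2]{w}\lesssim\norm[2]{\nabla v}$. Absorbing the coupling into the dissipation $\varepsilon^{-\alpha}\norm[2]{\nabla v}^2$ still leaves a source $C\varepsilon^{-\alpha}\norm[2]{T}^2$, which is not uniformly bounded. This is exactly the singularity to be removed.

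\textbf{Buoyancy flow--free flow decomposition.} For each $t$, define the buoyancy flow $(v_b,w_b,p_b)$ as the solution of the stationary, $T$-driven Stokes-type system
\[
\nabla_h p_b+v_b^\perp-\Delta v_b=0,\qquad \partial_z p_b+T-\varepsilon^2\Delta w_b=0,\qquad \nabla_h\cdot v_b+\partial_z w_b=0,
\]
with the same symmetry and mean-zero conditions. A Fourier computation should give elliptic estimates for $(v_b,w_b)$ in terms of $T$, uniformly in $\varepsilon$. The expected bounds are $\norm[H^{s+1}]{v_b}+\norm[H^{s}]{w_b}\lesssim\norm[H^{s}]{T}$, with $w_b$ controlled through $\partial_z w_b=-\nabla_h\cdot v_b$. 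The time derivative $\partial_t(v_b,\varepsilon^2 w_b)$ is then controlled by $\partial_t T=\Delta T-\nabla\cdot(uT)+Q$, which lies in $H^{-1}$ with norm $\lesssim \norm[2]{\nabla T}+\norm[\infty]{T}\norm[2]{u}+\norm[2]{Q}$.

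The free flow $(v_f,w_f)=(v-v_b,w-w_b)$ then satisfies a Boussinesq-type system with \emph{no} singular buoyancy term. Its forcing consists of $-\partial_t(v_b,\varepsilon^2w_b)$ and the nonlinear interactions $u\cdot\nabla u_b$, $u_b\cdot\nabla u_f$.

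\textbf{Free flow energy estimate.} Testing the free flow system with $(v_f,\varepsilon^2w_f)$ gives
\[
\frac{d}{dt}\bigl(\norm[2]{v_f}^2+\varepsilon^2\norm[2]{w_f}^2\bigr)+\varepsilon^{-\alpha}\bigl(\norm[2]{\nabla v_f}^2+\varepsilon^2\norm[2]{\nabla w_f}^2\bigr)\le \text{(forcing)} .
\]
The forcing terms will be bounded by $C(\norm[2]{\nabla T}^2+\norm[2]{Q}^2)$ plus terms of the form $C(\text{buoyancy norms})\,\norm[2]{\nabla u_f}^2$ and $C\norm[\infty]{T}^2\norm[2]{u_f}^2$. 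The dissipation carries the large factor $\varepsilon^{-\alpha}$, so for $\varepsilon<\varepsilon_0(T_0,Q)$ these can be absorbed. This is where $\varepsilon_0$ depends on the data.

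This step is the main obstacle. The cubic term $\int(u_f\cdot\nabla u_b)\cdot u_f$ requires $\nabla u_b$ in a space that is only integrable in time (from $T\in L^2H^1\cap L^\infty L^\infty$). It must be handled by Ladyzhenskaya/Gagliardo--Nirenberg interpolation together with the $\varepsilon^{-\alpha}$ dissipation and a Grönwall factor $\exp\bigl(C\varepsilon^{\alpha}\int_0^\infty\norm[H^1]{T}^2\,dt\bigr)$, which is uniformly bounded.

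\textbf{Recombining.} Since $\partial_zw_f=-\nabla_h\cdot v_f$ and $w_f$ is odd in $z$, we get $\int\norm[2]{w_f}^2\,dt\lesssim\int\norm[2]{\nabla v_f}^2\,dt\lesssim\varepsilon^{\alpha}$. The remaining norms in \eqref{thmest:weak-global-sol} follow from the buoyancy-flow bounds and $u=u_b+u_f$, all uniformly in $\varepsilon$. Applying the same estimates at the Galerkin level and passing to the limit yields the global weak solution.
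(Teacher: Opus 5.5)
Your decomposition differs from the paper's. You keep $-\varepsilon^2\Delta w_b$ in the buoyancy flow, whereas \eqref{sys:BF} is purely hydrostatic, $\partial_z p_T+T=0$. This is legitimate. The identity $\|\nabla v_b\|_2^2+\varepsilon^2\|\nabla w_b\|_2^2=-(T,w_b)$, together with $\|w_b\|_2\le\|\nabla v_b\|_2$, gives your uniform bounds, plus the extra bound $\varepsilon\|\nabla^{s+1}w_b\|_2\le\|\nabla^sT\|_2$. It also buys something: the free flow no longer contains the cross term $-\varepsilon^{2-\alpha}(\nabla w_T,\nabla w_F)$ (the paper's $I_7$). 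That term is the only place where the paper's $L^2$-level estimate needs $\alpha<2$.

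\textbf{The gap is in the step you flag as the main obstacle.} Test $(u_f\cdot\nabla u_b)$ against $(v_f,\varepsilon^2 w_f)$. The horizontal piece $(v_f\cdot\nabla_h v_b,v_f)$ is fine by Ladyzhenskaya, but the vertical piece $(w_f\partial_z v_b,v_f)$ is not.

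\begin{itemize}
\item The factor $w_f$ is controlled only through $\|w_f\|_2\lesssim\|\nabla v_f\|_2$, which already spends a full power of the dissipation.
\item With only $\|v_b\|_{H^{s+1}}\lesssim\|T\|_{H^s}$, every isotropic H\"older/Gagliardo--Nirenberg splitting leaves one of two bad outcomes. Either you get $\|\partial_z v_b\|_3\|\nabla v_f\|_2^2$, where $\|\partial_z v_b\|_3\lesssim\|T\|_2^{1/2}\|\nabla T\|_2^{1/2}$ is not bounded in time. Or, after Young's inequality with $\partial_z v_b\in L^b$, $3<b\le 6$, you get a Gr\"onwall weight $\|\nabla T\|_2^{r}$ with $r=(3b-6)/(b-3)\ge 4$. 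That weight is not integrable on $(0,\infty)$, since only $\nabla T\in L^2_tL^2_x$ is known.
\item Interpolating with $\|\varepsilon\nabla w_f\|_2$ instead produces negative powers of $\varepsilon$, which the $\varepsilon^{-\alpha}$ dissipation beats only when $\alpha>1/2$.
\end{itemize}
So the factor $\exp\bigl(C\varepsilon^\alpha\int_0^\infty\|T\|_{H^1}^2\,dt\bigr)$ you propose does not come out of this term.

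\textbf{How the paper closes it.} It uses two structural ingredients you do not mention.
\begin{itemize}
\item The representation $w=-\int_0^z\nabla_h\cdot v\,d\zeta$ puts $w$ in $L^\infty_zL^2_h$. This allows 2D Ladyzhenskaya on horizontal slices (see \eqref{est:ff-010}--\eqref{est:ff-012}).
\item The hydrostatic gain \eqref{est:bf-102}, $\|\partial_z v_T\|_{H^{s+1}}\lesssim\|T\|_{H^s}$.
\end{itemize}

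The second ingredient survives your modification. Differentiate your first equation in $z$, substitute $\partial_z p_b=-T+\varepsilon^2\Delta w_b$, test with $\partial_z v_b$, and use $\nabla_h\cdot\partial_z v_b=-\partial_{zz}w_b$. This gives $\|\nabla\partial_z v_b\|_2^2+\varepsilon^2\|\nabla\partial_z w_b\|_2^2=-(T,\nabla_h\cdot\partial_z v_b)$. Hence $\|\partial_z v_b\|_3\lesssim\|T\|_2$ uniformly in time. Then $|(w_f\partial_z v_b,v_f)|\lesssim\|T\|_2\|\nabla v_f\|_2^2$, which is absorbed for small $\varepsilon$.

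The term $\varepsilon^2(v_f\cdot\nabla_h w_b,w_f)$ needs similar care. Use either the anisotropic bound $\|\nabla_h w_b\|_{L^\infty_zL^2_h}\lesssim\|\nabla^2 v_b\|_2$ (the paper's $I_{5,2}$) or your extra bound $\varepsilon\|\nabla^2 w_b\|_2\le\|\nabla T\|_2$.

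With these added, the rest of your plan goes through.
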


\begin{proof}
    The proof of theorem \ref{thm:uniform-weak-sol} follows from sections \ref{sec:well-posedness},  \ref{sec:est-freeflow}, and \ref{subsec:approximation}. 
\end{proof}

\begin{theorem}[Planetary geostrophic approximation]
    \label{thm:pge-limit}
    Fix $ \mathcal T \in (0,\infty) $. 
    Under the same conditions as in theorem \ref{thm:uniform-weak-sol}, upon selecting a subsequence $ \varepsilon \rightarrow 0 $, 
    \begin{equation} (v,w,T) \rightarrow (v_p,w_p,T_p) \quad \text{in the proper topology (see \eqref{limit:weak-1}--\eqref{limit:strong-4}, below)}, \end{equation}
    with 
    \begin{equation}
    v_p \in L^\infty(0,\mathcal T;L^2(\mathbb T^3)) \cap L^2(0,\mathcal T;H^2(\mathbb T^3)), \quad w_p, T_p \in L^\infty(0,\mathcal T;L^2(\mathbb T^3)) \cap L^2(0,\mathcal T;H^1(\mathbb T^3)), 
    \end{equation}
    such that $ (v_p, w_p, T_p) $ is a global weak solution to the planetary geostrophic equations \eqref{sys:PGE}.
\end{theorem}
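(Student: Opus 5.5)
We will pass to the limit using the uniform bound \eqref{thmest:weak-global-sol} from theorem \ref{thm:uniform-weak-sol}, together with the buoyancy flow--free flow decomposition introduced in section \ref{sec:strategy}.

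\textbf{Step 1: weak limits.} On $[0,\mathcal T]$, \eqref{thmest:weak-global-sol} gives $v,T$ bounded in $L^\infty(0,\mathcal T;L^2)\cap L^2(0,\mathcal T;H^1)$, $w$ bounded in $L^2(0,\mathcal T;L^2)$, $\varepsilon w$ bounded in $L^\infty L^2\cap L^2H^1$, and $T$ bounded in $L^\infty L^\infty$. We extract subsequences with
\[
v\rightharpoonup v_p,\qquad T\rightharpoonup T_p,\qquad w\rightharpoonup w_p,
\]
in the corresponding weak(-$*$) topologies, and $\varepsilon w\rightharpoonup 0$ in $L^2H^1$.

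\textbf{Step 2: compactness of the temperature.} We obtain compactness for $T$ from $\eqref{sys:BE}_4$. Writing $v\cdot\nabla_h T+w\partial_z T=\nabla\cdot((v,w)T)$, the bounds $T\in L^\infty$ and $(v,w)\in L^2L^2$ give $\partial_t T$ bounded in $L^2(0,\mathcal T;H^{-1})$. The Aubin--Lions lemma then yields $T\to T_p$ strongly in $L^2(0,\mathcal T;L^2)$. The same argument applies to $v$, with one exception: the singular terms $\varepsilon^{-\alpha}(\nabla_h p+v^\perp-\Delta v)$ prevent a uniform bound on $\partial_t v$. To handle this we use the decomposition $v=v_b+v_f$, where the buoyancy flow $(v_b,w_b,p_b)$ solves the stationary balance $\nabla_h p_b+v_b^\perp-\Delta v_b=0$, $\partial_z p_b+T-\varepsilon^2\Delta w_b=0$, $\nabla\cdot(v_b,w_b)=0$, driven by $T$. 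The estimates of section \ref{sec:well-posedness} show that $v_b$ is controlled in $L^2H^2$ by $T$ in $L^2L^2$, and that $\partial_t v_b$ is controlled by $\partial_t T$. It follows that $v_b$ is compact in $L^2H^1$. The free flow $v_f$ satisfies a damped equation with no source from $T$. The free flow energy estimates of section \ref{sec:est-freeflow} should give $\|v_f,\varepsilon w_f\|_{L^2L^2}\to 0$ as $\varepsilon\to0$: the damping is of order $\varepsilon^{-\alpha}$, so $\int_0^{\mathcal T}\|v_f\|_{H^1}^2\lesssim \varepsilon^{\alpha}$ times (data plus forcing by $\partial_t v_b$ and the nonlinear terms). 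Hence $v\to v_p$ strongly in $L^2(0,\mathcal T;L^2)$, and $v_p$ equals the limit of $v_b$, which gives $v_p\in L^2H^2$.

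\textbf{Step 3: identifying the limit system.} We multiply $\eqref{sys:BE}_{1,2}$ by $\varepsilon^\alpha$ and test against divergence-free $(\phi,\psi)$. The time-derivative and nonlinear terms carry a factor $\varepsilon^\alpha$ and are bounded, so they vanish in the limit. The vertical equation is multiplied by $\varepsilon^{\alpha+2}$, giving $\partial_z p+T=O(\varepsilon^2)$ in distributions. We recover the pressure via de Rham's theorem, using the weak limit of $p$, which is bounded in $L^2H^{-1}$ by the equation. In the limit this yields $\eqref{sys:PGE}_{1,2}$. Incompressibility passes to the limit linearly. In $\eqref{sys:PGE}_4$ the products $vT$ and $wT$ converge because $T\to T_p$ strongly while $(v,w)$ converges weakly in $L^2L^2$. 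The regularity $w_p\in L^2H^1$ follows a posteriori from $w_p=-\int\nabla_h\cdot v_p\,dz$ and $v_p\in L^2H^2$. The bound $w_p\in L^\infty L^2$ follows from the PGE elliptic relation, since $T_p\in L^\infty L^2$.

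The main obstacle is Step 2. There we must show that the free flow decays uniformly despite the singular $\varepsilon^{-\alpha-2}$ buoyancy, and we must control $\partial_t v_b$ using only the weak regularity of $\partial_t T$. We expect this to require the condition $\alpha<2$ and the $L^\infty$ bound on $T$.
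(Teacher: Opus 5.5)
Your argument is correct, but its identification step differs from the paper's. In the paper, the limit equations come almost for free. For every $\varepsilon$, \eqref{sys:BF} is exactly the diagnostic part $\eqref{sys:PGE}_{1,2,3}$ driven by the same $T$. Aubin--Lions gives $v_T\to v_p$ in $L^2(0,\mathcal T;H^1)$ and $w_T,T\to w_p,T_p$ in $L^2(0,\mathcal T;L^2)$, and \eqref{est:unifrom-in-varepsilon-spatial} gives $\|v_F\|_{L^2H^1}+\|w_F\|_{L^2L^2}\lesssim\varepsilon^{\alpha/2}$. The momentum relations then pass to the limit by linearity. The limit pressure is the limit of $p_T$, which \eqref{sys:BF} bounds in $L^2H^1$, so $p_F$ never needs to be controlled. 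The transport term passes because every factor converges strongly.

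You instead weight the momentum equations by $\varepsilon^{\alpha}$ and $\varepsilon^{\alpha+2}$ respectively (these different weights are what make the pressure drop out), test against divergence-free fields, and recover $p_p$ by de Rham. This needs only weak limits plus strong convergence of $T$. So it shows that every limit point solves \eqref{sys:PGE} using nothing beyond \eqref{thmest:weak-global-sol} and the resulting $L^2H^{-1}$ bound on $\partial_t T$. The regularity $v_p\in L^2H^2$, $w_p\in L^\infty L^2$ then follows from Proposition \ref{bf-est} applied to the limit. Your Step 2 is needed only for the strong convergences in \eqref{limit:weak-1}--\eqref{limit:strong-4}. Its ingredients in fact give $v\to v_p$ in $L^2(0,\mathcal T;H^1)$, hence $w\to w_p$ in $L^2(0,\mathcal T;L^2)$ via $w=-\int_0^z\nabla_h\cdot v\,d\zeta$, as the paper states.

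Three small corrections are needed. First, the claim that $p$ is bounded in $L^2H^{-1}$ is not justified, since $\varepsilon^\alpha\partial_t(v,\varepsilon^2 w)$ is controlled only in a negative-order space in time. You do not need it, because de Rham can be applied to the limit identity. Second, your buoyancy flow keeps $-\varepsilon^2\Delta w_b$, so it is not \eqref{sys:BF}, and Sections \ref{sec:well-posedness}--\ref{sec:est-freeflow} do not literally apply to it. Either use \eqref{sys:BF} itself, or check that the extra term only adds nonnegative dissipation (e.g.\ $\varepsilon^2\|\nabla w_b\|_2^2$) to the elliptic estimates. It also removes the cross term $I_7$ in \eqref{est:ff-025}, which is the only source of the restriction $\alpha<2$ at the $L^2$ level. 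Third, $\|v_b\|_{H^2}$ is controlled by $\|T\|_{H^1}$, not $\|T\|_{L^2}$; this is harmless since $T$ is bounded in $L^2H^1$.
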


\begin{proof}
    The proof of theorem \ref{thm:pge-limit} follows from section \ref{subsec:compactness}.
\end{proof}

\begin{theorem}[Global strong solution]
    \label{thm:global-strong} 
    In addition to the conditions of theorem \ref{thm:uniform-weak-sol}, consider the initial data $ (v_0,w_0,T_0) \in H^1(\mathbb T^3) $. Then there exists $ 0<\varepsilon_1 < \varepsilon_0 $, depending on the initial data and $ Q $, such that for all $ \varepsilon < \varepsilon_1 $, there exists a unique global strong solution to system \eqref{sys:BE} with $ 0 < \alpha < 2 $, satisfying 
    \begin{equation}
        \label{thmest:strong-global-sol}
        \sup_{0\leq t < \infty} \bigl\lbrace \norm[H^1]{v(t), \varepsilon w(t), T(t)}^2 + \norm[2]{w(t)}^2 \bigr\rbrace + \int_0^\infty \bigl\lbrace \norm[H^2]{v(t),\varepsilon w(t),T(t)}^2 + \norm[H^1]{w(t)}^2 \bigr\rbrace \,dt < \infty, 
    \end{equation}
    uniformly-in-$\varepsilon $. 
\end{theorem}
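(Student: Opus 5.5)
The plan is to run a higher-order version of the argument behind theorem \ref{thm:uniform-weak-sol}. We start from the local strong solution, which exists for each fixed $\varepsilon$ on an interval depending on $\varepsilon$, by the Navier--Stokes-type local theory recalled in the introduction. We then show that the $H^1$-level energy in \eqref{thmest:strong-global-sol} obeys a differential inequality whose constants are independent of $\varepsilon$ once $\varepsilon<\varepsilon_1$. A continuation argument then makes the solution global. Uniqueness follows from a standard $L^2$ difference estimate for two strong solutions at fixed $\varepsilon$. That estimate is essentially Navier--Stokes in $\mathbb T^3$ with a linear coupling through $T$, and it closes because $\nabla(v,w)\in L^2_tL^3_x$ via $H^1\cap H^2$ interpolation.

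\textbf{Step 1 (decomposition).} Following section \ref{sec:strategy}, write $(v,w,p)=(v_b,w_b,p_b)+(v_f,w_f,p_f)$. The buoyancy flow $(v_b,w_b,p_b)$ solves the stationary balance $\nabla_h p_b+v_b^\perp-\Delta v_b=0$, $\partial_z p_b+T=0$, $\nabla_h\cdot v_b+\partial_z w_b=0$, with $T$ the actual temperature. This is an elliptic (Stokes--Coriolis) system, so $\|v_b\|_{H^{k+1}}+\|w_b\|_{H^{k}}\lesssim\|T\|_{H^{k-1}}$ uniformly in $\varepsilon$; these are the basic buoyancy-flow estimates of section \ref{sec:well-posedness}. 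The free flow $(v_f,w_f)$ then satisfies a system with no singular $\varepsilon^{-\alpha-2}T$ term. Its forcing is $-\partial_t(v_b,w_b)$ plus advection, and $\partial_t(v_b,w_b)$ is controlled via the elliptic estimate by $\partial_t T$, which comes from the temperature equation. The term $\varepsilon^{-\alpha}\Delta w_b$ in the $w$-equation is balanced by the $\varepsilon^{-\alpha-2}$-weighted pressure, which explains the anisotropic weights $\varepsilon w$ in the estimate.

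\textbf{Step 2 ($H^1$ estimates).} Test the temperature equation with $-\Delta T$. Write $u=(v,w)$ and $u_b=(v_b,w_b)$. The nonlinear term is bounded by $\|\nabla u\|_{3}\|\nabla T\|_2\|\nabla T\|_6$, and we use $\|u_b\|_{H^2}\lesssim\|T\|_{H^1}$ together with the maximum principle bound $\|T\|_\infty$ from theorem \ref{thm:uniform-weak-sol}. For the buoyancy part, integrate by parts as $-\int (u_b\cdot\nabla T)\Delta T$, use $\|T\|_\infty$, and apply the Cao--Titi PGE trick: $L^\infty$ of $T$ controls the velocity in $W^{1,q}$ for $q$ large. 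This gives $\frac{d}{dt}\|\nabla T\|_2^2+\|\Delta T\|_2^2\lesssim(1+\|\nabla T\|_2^2)\|\nabla T\|_2^2\,\cdot$(an integrable factor), which closes by Gronwall. For the free flow, test with $-\Delta v_f$ and $-\varepsilon^2\Delta w_f$. The viscous terms carry the factor $\varepsilon^{-\alpha}$, while the advection nonlinearities carry only $O(1)$ coefficients. Hence the cubic term $\|\nabla u_f\|_2^{3/2}\|\Delta u_f\|_2^{3/2}$ is absorbed through Young's inequality with a remainder of size $\varepsilon^{3\alpha}\|\nabla u_f\|_2^6$, and $\varepsilon^{3\alpha}$ is small. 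We also need the free-flow $L^2$ energy to be small. It starts at $O(1)$, but it decays like $e^{-t/\varepsilon^\alpha}$ up to forcing that is small in $\varepsilon$, which was presumably established in section \ref{sec:est-freeflow}. In short time the $H^1$ norm of $u_f$ is controlled by local theory; after that, the large dissipation $\varepsilon^{-\alpha}$ together with the smallness yields the global bound.

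\textbf{Main obstacle.} The hardest point is the initial layer. Since $u_0$ is not well prepared, $\nabla u_f(0)$ is $O(1)$ in $L^2$ while the forcing $\partial_t u_b$ need not be small. I would try first to split $u_f$ into two parts. The first is a linear, rapidly damped part solving $\partial_t U+\varepsilon^{-\alpha}(\text{Stokes--Coriolis})U=0$ with data $u_0-u_b(0)$. The second is a remainder with zero data, driven by $\partial_t u_b$, whose size is $O(\varepsilon^\alpha)$ in the weighted norms. The cubic energy estimate then closes for $\varepsilon<\varepsilon_1$, with $\varepsilon_1$ depending on $\|(v_0,w_0,T_0)\|_{H^1}$, $\|T_0\|_\infty$ and $Q$. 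Finally, sum the bounds to obtain \eqref{thmest:strong-global-sol}.
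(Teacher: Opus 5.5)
Your architecture matches the paper's: the buoyancy/free-flow splitting, testing with $-\Delta v_F$, $-\varepsilon^2\Delta w_F$ and $-\Delta T$, absorbing nonlinearities into the $\varepsilon^{-\alpha}$ dissipation, and a continuation argument. The execution, however, has a genuine gap. Your nonlinear bounds are isotropic. But the $H^1$ energy controls $\nabla_h w_F$ only with the weight $\varepsilon$, and the dissipation contains $\Delta w_F$ only as $\varepsilon^{2-\alpha}\|\Delta w_F\|_2^2$, which is \emph{weak} since $2-\alpha>0$.

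In the terms where $w_F$ enters unweighted, $(w_F\partial_z v_F,\Delta v_F)$ and $(w_F\partial_z T,\Delta T)$, your bounds do not close for all $0<\alpha<2$. Take $\|w_F\|_6\|\partial_z v_F\|_3\|\Delta v_F\|_2$:
\begin{itemize}
\item with $\|\nabla w_F\|_2\le\varepsilon^{-1}\mathcal E_1^{1/2}$, Young leaves $\varepsilon^{4\alpha-4}\mathcal E_1^2\,(\varepsilon^{-\alpha}\|\nabla v_F\|_2^2)$, whose time integral is $O(\varepsilon^{4\alpha-4})$ and hence unbounded for $\alpha<1$;
\item with $\|\nabla w_F\|_2\lesssim\|\nabla^2 v_F\|_2$ the term becomes superlinear in the dissipation;
\item the balanced interpolation between the two still leaves $\varepsilon^{\alpha-1/2}\mathcal E_1^{1/2}\mathcal D_1$.
\end{itemize}
Likewise, $\|\nabla w_F\|_3$ in your bound $\|\nabla u\|_3\|\nabla T\|_2\|\nabla T\|_6$ needs $\|\Delta w_F\|_2$, which puts a prefactor $\varepsilon^{(\alpha-1)/2}$ in front of $\mathcal D_1$.

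The missing idea is the anisotropic, primitive-equations-type estimate. Write $w_F=-\int_0^z\nabla_h\cdot v_F\,d\zeta$ (and similarly for $w_T$), then use Minkowski in $z$ and Ladyzhenskaya on horizontal slices. This way only the strongly dissipated quantities appear, for example
\begin{itemize}
\item $J_4\lesssim\|\nabla v_F\|_2\|\Delta v_F\|_2^2\le\varepsilon^{\alpha}\|\nabla v_F\|_2\mathcal D_1$,
\item $J_{17}\lesssim\|\nabla v_F\|_2^{1/2}\|\Delta v_F\|_2^{1/2}\|\nabla T\|_2^{1/2}\|\Delta T\|_2^{3/2}\le\frac{1}{100}\|\Delta T\|_2^2+C\varepsilon^{\alpha}\mathcal E_1^2\mathcal D_1$.
\end{itemize}
Your $\|T\|_\infty$-based handling of $u_b\cdot\nabla T$ can be salvaged, e.g.\ via $\|u_b\|_4\|\nabla T\|_4\|\Delta T\|_2$ with $\|\nabla T\|_4^2\lesssim\|T\|_\infty\|\Delta T\|_2$; the problem is the free-flow part.

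Relatedly, your buoyancy bound $\|v_b\|_{H^{k+1}}+\|w_b\|_{H^k}\lesssim\|T\|_{H^{k-1}}$ overstates the gain by one derivative. On modes with $|k_3|=1$, $|k_h|\to\infty$, one finds $\hat w_T\approx-\hat T$ and $|\hat v_T|\approx|\hat T|/|k_h|$. So only \eqref{est:bf-101} holds, and $\|u_b\|_{H^2}\lesssim\|T\|_{H^1}$ is false. Consequently two terms must be paid for with temperature dissipation:
\begin{itemize}
\item $\varepsilon^2(\partial_t w_T,\Delta w_F)$ costs $\|\partial_t T\|_2$ and yields $\varepsilon^{\alpha}C(1+\mathcal E_1)\|\Delta T\|_2^2$;
\item $-\varepsilon^{2-\alpha}(\Delta w_T,\Delta w_F)$ costs $\|\Delta T\|_2$ and yields $\varepsilon^{2-\alpha}C\|\Delta T\|_2^2$.
\end{itemize}
The second term is what $\varepsilon^{-\alpha}\Delta w_b$ actually produces; it is not balanced by the pressure. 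This is exactly where $\alpha<2$ is used, a point your sketch never reaches.

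Because of this two-way coupling, you cannot close the temperature estimate by its own Gr\"onwall argument: $\|\Delta T\|_2^2$ enters the free-flow estimate, and $\|\Delta v_F\|_2^2$ enters the temperature estimate. The two $H^1$ estimates must be summed, which gives \eqref{h-1-est:101}. That is, $\frac{d}{dt}\mathcal E_1+\mathcal D_1\le\varepsilon^{\min\{\alpha,2-\alpha\}}C(1+\mathcal E_1^2)\mathcal D_1$, plus terms whose time integrals are controlled by \eqref{est:unifrom-in-varepsilon-spatial} and the integrable weight $\|T\|_2^2\|\nabla T\|_2^2$.

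The smallness comes from the prefactors $\varepsilon^{\alpha}$ and $\varepsilon^{2-\alpha}$, not from the size of the free flow. A continuity argument on $\sup_t\mathcal E_1\le 2M$, with $M$ determined by $\mathcal E_1(0)$, $T_0$ and $Q$, closes for $\varepsilon<\varepsilon_1(M)$ with $O(1)$ data. Your remainder $\varepsilon^{3\alpha}\|\nabla v_F\|_2^6=\varepsilon^{4\alpha}\|\nabla v_F\|_2^4\,\varepsilon^{-\alpha}\|\nabla v_F\|_2^2$ is already integrable by \eqref{est:ff-103}; no decay or smallness of $u_F$ is needed.

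So the initial layer you single out as the main obstacle is not one, and the splitting you sketch for it (left unexecuted) is unnecessary. The real obstacles, the anisotropy and the coupling above, are not addressed. Your uniqueness argument is fine.
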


\begin{proof}
    The proof of theorem \ref{thm:global-strong} follows from section \ref{sec:H-1-sol}.
\end{proof}

\subsection{Non-dimensionalization}
\label{subsec:non-dimensionalization}

For the sake of completeness,
we present here the formal derivation of the singular system \eqref{sys:BE}; that is, the non-dimensionalization of the Boussinesq equations. Recall them in their (unscaled) dimensional form:
\begin{equation}
    \label{sys:boussinesq}
    \begin{cases}
        \partial_t v+v\cdot\grad_h{v}+w\partial_z v+\grad_h{p}+v^\perp-\nu_h\Delta_h v-\nu_v\partial_{zz}v=0, \\
        \partial_t w+v\cdot\grad_h{w}+w\partial_z w+\partial_z p+T-\nu_h\Delta_h w-\nu_v\partial_{zz}w=0,    \\
        \grad_h{}\cdot v+\partial_z w=0,                                                                   \\
        \partial_t T+v\cdot\grad_h{T}+w\partial_z T-\kappa_h\Delta_h T-\kappa_v\partial_{zz} T=Q,            \\
    \end{cases}
\end{equation}
where $\Delta_h=\partial_{xx}+\partial_{yy}$ is the Laplacian in the horizontal $(x,y)$-variable. By writing all (independent and dependent) variables in the form of multiplication of the typical values with the non-dimensional variables,
one can write
\begin{gather*}
    (x,y,z,t)=(Lx',Ly',Hz',\tau t'),\quad (v,w)=(Vv',Ww'),\quad p=Pp',\quad T=\tilde{T}T',\quad Q=\tilde{Q}Q',
\end{gather*}
where the $(\cdot)'$ variables are dimensionless and $\vs{O}(1)$. Abusing the notation and dropping the primes, system \eqref{sys:boussinesq} can be written as
\begin{equation}
    \label{sys:be_nndmt}
    \begin{cases}
        \partial_t v+v\cdot\grad_h{v}+w\partial_z v+\frac{1}{\text{Ma}^2}\grad_h{p}+\frac{1}{\text{Ro}}v^\perp-\frac{1}{\text{Re}_h}\Delta_h v-\frac{1}{\text{Re}_v}\partial_{zz}v=0,                  \\
        \partial_t w+v\cdot\grad_h{w}+w\partial_z w+\frac{1}{\text{Ma}^2}\frac{1}{\iota^2}\partial_z p+\frac{1}{\text{Fr}^2}T-\frac{1}{\text{Re}_h}\Delta_h w-\frac{1}{\text{Re}_v}\partial_{zz}w=0, \\
        \grad_h{}\cdot v+\partial_z w=0,                                                                                                                                                                     \\
        \partial_t T+v\cdot\grad_h{T}+w\partial_z T-\frac{1}{\text{Pe}_h}\Delta_h T-\frac{1}{\text{Pe}_v}\partial_{zz}T=\frac{\tilde{Q}L}{\tilde{T}V}Q.                                        \\
    \end{cases}
\end{equation}
Here we have set $\frac{1}{\tau}=\frac{V}{L}=\frac{W}{H}$ and
\begin{equation}
    \label{characteristic-number}
    \text{Ma}:=\frac{V}{\sqrt{P}},\quad\text{Ro}:=\frac{V}{L},\quad\text{Fr}:=\frac{W}{\sqrt{\tilde{T}H}},\quad\text{Re}_h:=\frac{LV}{\nu_h},\quad\text{Re}_v:=\frac{HW}{\nu_v},\quad \text{Pe}_h:=\frac{LV}{\kappa_h},\quad\text{Pe}_v:=\frac{HW}{\kappa_v},
\end{equation}
where $\text{Ma}$ is the Mach number, $\text{Ro}$ the Rossby number, $\text{Fr}$ the Froude number, and $\text{Re}_i$ and $\text{Pe}_i$ for $i\in \{h,v\}$ the horizontal and vertical Reynolds and Péclet numbers, respectively. We also define the {aspect ratio} $\iota:=\frac{H}{L}=\frac{W}{V}$. Taking
\begin{equation}
    \label{def:scale}
    \iota=\varepsilon,\quad \text{Fr}^2=\varepsilon^{\alpha+2},\quad \text{Ma}^2=\text{Ro}=\text{Re}_h=\text{Re}_v=\varepsilon^\alpha,\quad\text{and}\quad\text{Pe}_h=\text{Pe}_v=\frac{\tilde{T}V}{\tilde{Q}L}=1
\end{equation}
in system \eqref{sys:be_nndmt}
leads to \eqref{sys:BE}.

\subsection{Functional spaces and notation}
In the following, we denote by
\begin{align*}
    \norm[p]{f}:=\left(\int_{\mathbb{T}^3}|f|^p\,d\mathbf{x}\right)^\frac{1}{p}
\end{align*}
the $L^p(\mathbb{T}^3)$ norm for every $f\in L^p(\mathbb{T}^3)$, by
\begin{align*}
    (f,g):=\int_{\mathbb{T}^3}fg\,d\mathbf{x}
\end{align*}
the $L^2(\mathbb{T}^3)$ inner product for every $f,g\in L^2(\mathbb{T}^3)$, and by
\begin{align*}
    \norm[H^s]{f}:=\left(\sum_{|\alpha|\leq s} \norm[2]{\partial^\alpha f}\right)^\frac{1}{2},\quad s\geq 0\text{ and }\alpha\in \mathbb{Z}^3_{\geq 0}\text{ a multi-index},
\end{align*}
the Sobolev $H^s(\mathbb{T}^3)$ norm for every $f\in H^s(\mathbb{T}^3)$, with the convention that $H^0(\mathbb{T}^3)=L^2(\mathbb{T}^3)$. We let $H^{-s}(\mathbb{T}^3)$ be the dual space of $\dot{H}^s(\mathbb{T}^3):=\{f\in H^s(\mathbb{T}^3) : \int_{\mathbb{T}^3} f\,d\mathbf{x}=0\}$, with operator norm
\begin{align*}
    \norm[H^{-s}]{\phi}:=\sup_{\Vert{\grad^s{f}}\Vert_{2}=1}\phi(f)
\end{align*}
for every $\phi\in H^{-s}(\mathbb{T}^3)$. We denote the dual action between $ H^{-s} $ and $ \dot H^s $ by
\begin{align*}
    \langle \phi,f\rangle:=\phi(f).
\end{align*}
We also utilize the Bochner spaces $L^p(I;X)$, where $I\subset [0,\infty)$ is an interval and $X$ is some normed space over $\mathbb{T}^3$, with norm
\begin{align*}
    \norm[L^p(I;X)]{f}:=\left(\int_I\,\norm[X]{f(t)}^p\,dt\right)^\frac{1}{p}
\end{align*}
for every $f\in L^p(I;X)$. Similar notations are used for functional spaces over $ \mathbb T^2 $.
% When the domain is something other than $\mathbb{T}^3$, we specify it in the subscript of the norm, e.g.,
% \begin{align*}
%     \norm[L^p(\mathbb{T}^2)]{f(z)}:=\left(\int_{\mathbb{T}^2}|f(z)|^p\,dxdy\right)^\frac{1}{p}
% \end{align*}
% for every $f(z)\in L^p(\mathbb{T}^2)$. 
We abuse the notation by using the same functional spaces, e.g., $L^2(\mathbb{T}^3)$, to include both scalar- and vector-valued functions. %, and oftentimes we write $\Vert{f}\Vert_{H^s}$ instead of $\Vert{f(t)}\Vert_{H^s}$ when $f$ is time-dependent.

Lastly, we use the notation
\begin{equation*}
    A \lesssim B
\end{equation*}
for any $ A $ and $ B $
to represent 
\begin{equation*}
    A \leq C B 
\end{equation*}
for some generic constant $ C \in (0,\infty) $, different from line to line, uniform-in-$\varepsilon$.

% \begin{align*}
%     \begin{cases}
%         \frac{V}{\tau}\partial_t v+\frac{V^2}{L}v\cdot\grad{v}+\frac{WV}{H}w\partial_z v+\frac{P}{L}\grad{p}+FVv^\perp-\frac{\tilde{\nu}_h V}{L^2}\Delta v-\frac{\tilde{\nu}_v V}{H^2}\partial_{zz}v=0, &\\
%         \frac{W}{\tau}\partial_t w+\frac{VW}{L}v\cdot\grad{w}+\frac{W^2}{H}w\partial_z w+\frac{P}{H}\partial_z p+\tilde{T}T-\frac{\tilde{\nu}_h W}{L^2}\Delta w-\frac{\tilde{\nu}_v W}{H^2}\partial_{zz}w=0, &\\
%         \frac{V}{L}\grad{}\cdot v+\frac{W}{H}\partial_z w=0, &\\
%         \frac{\tilde{T}}{\tau}\partial_t T+\frac{V\tilde{T}}{L}v\cdot\grad{T}+\frac{W\tilde{T}}{H}w\partial_z T-\frac{\tilde{\kappa}_h\tilde{T}}{L^2}\Delta T-\frac{\tilde{\kappa}_v\tilde{T}}{H^2}\partial_{zz}T=\tilde{Q}Q. &\\
%     \end{cases}
% \end{align*}
% Now consider the (non-stationary) scale $\frac{1}{\tau}=\frac{V}{L}=\frac{W}{H}$ and set
% \begin{gather*}
%     \text{Ma}:=\frac{V}{\sqrt{P}},\quad\text{Ro}:=\frac{V}{FL},\quad\text{Fr}:=\frac{W}{\sqrt{\tilde{T}H}},\quad\text{Re}_h:=\frac{LV}{\tilde{\nu}_h},\quad\text{Re}_v:=\frac{HW}{\tilde{\nu}_v},\quad \text{Pe}_h:=\frac{LV}{\tilde{\kappa}_h},\quad\text{Pe}_v:=\frac{HW}{\tilde{\kappa}_v},
% \end{gather*}
% where $\text{Ma}$ is the Mach number, $\text{Ro}$ the Rossby number, $\text{Fr}$ the Froude number, and $\text{Re}_i$ and $\text{Pe}_i$ for $i\in \{h,v\}$ the horizontal and vertical Reynolds and Péclet numbers, respectively. We also define the \textit{aspect ratio} $\iota:=\frac{H}{L}=\frac{W}{V}$. Then the dimensionless system can be written as

% \section{Preliminary}

\section{Strategy and the buoyancy flow-free flow decomposition}
\label{sec:strategy}

The key step in rigorously justifying the asymptotic limit $ \varepsilon \rightarrow 0 $ in system \eqref{sys:BE}, leading to system \eqref{sys:PGE}, is obtaining a uniform-in-$\varepsilon$ estimate. However, due to the singular limit nature of system \eqref{sys:BE}, this is impossible in the current form. Indeed, a natural energy estimate of the kinetic energy, obtained by formally taking the $L^2(\mathbb{T}^3)$ inner product of $\eqref{sys:BE}_1$ and $\eqref{sys:BE}_2$ with $ v $ and $ \varepsilon^2 w $, respectively, and adding the resultants together leads to
\begin{equation}
    \label{eq:singular-structure}
    \frac{1}{2}\frac{d}{dt}\norm[2]{v,\varepsilon w}^2 + \norm[2]{\varepsilon^{-\alpha/2}\grad{v},\varepsilon^{1-\alpha/2}\grad{w}}^2 = -\varepsilon^{-\alpha}\int_{\mathbb{T}^3} T w \,d\mathbf{x},
\end{equation}
where the right hand side, caused by the buoyancy/gravity, is singular as $ \varepsilon \rightarrow 0 $.

In principle, such singular behavior is caused by the underlining planetary geostrophic dynamics, dominated by the limit system \eqref{sys:PGE}. Thus the dissipation on the left hand side of \eqref{eq:singular-structure} is an over estimate, leading to the singular right hand side. 

To overcome this difficulty, we introduce the following  decomposition to capture the limit, i.e., the planetary geostrophic dynamics of system \eqref{sys:BE}:
\begin{center}
    The Boussinesq flow $ = $ Buoyancy flow $ + $ Free flow.
\end{center}

To be more precise, we introduce the decomposition of $(v,w,p)$ into the following form:
\begin{align}\label{decomp}
    (v,w,p)=(v_T,w_T,p_T)+(v_F,w_F,p_F),
\end{align}
where $(v_T,w_T,p_T)$ is the flow ``forced'' by the temperature through buoyancy/gravity, referred to as the \textit{buoyancy flow}, satisfying
\begin{equation*}
    \tag{BF}
    \label{sys:BF}
    \begin{cases}
        \grad_h{p_T}+v_T^\perp-\Delta v_T=0, & \\
        \partial_z p_T+T=0,                                 & \\
        \grad_h{}\cdot v_T+\partial_z w_T=0.                  & \\
    \end{cases}
\end{equation*}
Then one can derive from the \eqref{sys:BE} and \eqref{sys:BF} that the \textit{free flow} $(v_F,w_F,p_F,T)$ satisfies
\begin{equation*}
    \tag{FF}
    \label{sys:FF}
    \begin{cases}
        \partial_t v_F+v_F\cdot\grad_h{v_F}+w_F\partial_z v_F+\frac{1}{\varepsilon^\alpha}(\grad_h{p_F}+v_F^\perp-\Delta v_F)                          \\
        \hspace{0.8cm}=-\partial_t v_T-v\cdot\grad_h{v_T}-v_T\cdot\grad_h{v_F}-w\partial_z v_T-w_T\partial_z v_F,                                                     & \\
        \partial_t w_F+v_F\cdot\grad_h{w_F}+w_F\partial_z w_F+\frac{1}{\varepsilon^{\alpha+2}}\partial_z p_F-\frac{1}{\varepsilon^\alpha}\Delta w & \\
        \hspace{0.8cm}=-\partial_t w_T-v\cdot\grad_h{w_T}-v_T\cdot\grad_h{w_F}-w\partial_z w_T-w_T\partial_z w_F,                                                     & \\
        \grad_h{}\cdot v_F+\partial_z w_F=0,                                                                                                                        & \\
        \partial_t T+v_F\cdot\grad_h{T}+w_F\partial_z T-\Delta T=Q-v_T\cdot\grad_h{T}-w_T\partial_z T,
    \end{cases}
\end{equation*}
where certain terms have not been expanded to save space. %, or because their formal expansions are unnecessary. 
We refer to \eqref{sys:BF}-\eqref{sys:FF} together as the \textit{intermediate system} of the buoyancy flow-free flow decomposition.
Notably, without loss of generality, we assume that \eqref{asmp:homogeneous-variables}--\eqref{asmp:bc} holds for both the buoyancy flow and the free flow.

Consider the following initial data for the intermediate system \eqref{sys:BF}-\eqref{sys:FF}:
\begin{equation}
    \label{initial:intermediate_sys}
    (v_F,w_F,T)\vert_{t=0} = (v_{F,0},w_{F,0},T_0).
\end{equation}
Let the initial data satisfy the following conditions:
\begin{equation}
    \label{initial:space}
    \begin{gathered}
        T_0 \in L^\infty(\mathbb T^3) \cap L^2(\mathbb T^3), \\
        v_{F,0}, w_{F,0} \in L^2(\mathbb T^3), \qquad \grad_h{v_{F,0}} + \partial_z w_{F,0} = 0 \quad \text{in the sense of distribution}.
    \end{gathered}
\end{equation}
The heat source $ Q $ is given, and satisfies
\begin{equation}
    \label{source:q}
    Q \in L^1(0,\infty; L^\infty(\mathbb T^3))\cap L^2(0,\infty;L^2(\mathbb T^3)).
\end{equation}

Our key step is then to obtain uniform-in-$\varepsilon$ estimates of the buoyancy flow, dominated by system \eqref{sys:BF}, and the free flow, dominated by system \eqref{sys:FF}. Notice that, system \eqref{sys:BF} is nothing but a linear elliptic system for $ (v_T, w_T, p_T) $ for any given $ T $. This captures the structure of the planetary geostrophic system \eqref{sys:PGE}. With $ (v_T, w_T, p_T) $ given, system \eqref{sys:FF} is a not-as-singular system for $ (v_F,w_F, p_F, T) $ with some source terms from the buoyancy flow. This particular decomposition captures the strong dissipation of the free flow, while preserving the limit dynamics of the planetary geostrophic flow.

\section{Existence of solutions and well-posedness for any fixed $\varepsilon>0$, elliptic estimates, and the temperature estimates}
\label{sec:well-posedness}

The intermediate system \eqref{sys:BF}--\eqref{sys:FF} is just a rewriting of the original Boussinesq system \eqref{sys:BE}. Therefore, the existence of global weak solutions, and the well-posedness of local strong solutions follow directly from the classical theory, with the estimates and/or existing time depending on $ \varepsilon $, for any fixed $\varepsilon>0$; see \cite{constantin_navier-stokes_1989}.

In order to investigate the limit $\varepsilon\to 0$ in system \eqref{sys:BE}, the main objective is to obtain uniform-in-$\varepsilon$ estimates of $(v,w,p,T)$. However, thanks to our decomposition \eqref{decomp}, this is equivalent to obtaining uniform-in-$\varepsilon$ estimates of the buoyancy flow $(v_T,w_T,p_T)$ and the free flow $(v_F,w_F,p_F,T)$. Moreover, notice that the buoyancy flow satisfies the linear elliptic problem \eqref{sys:BF} for any given $T$. This defines a map, referred to as the \textit{buoyancy map},
\begin{equation}
    \label{def:buoyancy_map}
    T\mapsto (v_T,w_T,p_T).
\end{equation}

Our first important ingredient of this work is the following property of the buoyancy map \eqref{def:buoyancy_map}:
\begin{proposition}[Buoyancy map]\label{bf-est}
    The buoyancy map \eqref{def:buoyancy_map} is well-defined for $ T $ in the proper space. In addition, the following estimates hold:
    %The following estimates hold for the solution of \eqref{sys:BF},
    \begin{enumerate}[label=(\roman*)]
        \item For every integer $s\geq -1$ such that $T\in H^s(\mathbb{T}^3)$, one has that
        \begin{equation}
                  \label{est:bf-101}
                  \norm[H^s]{w_T}\lesssim \norm[H^{s+1}]{v_T}\lesssim \norm[H^s]{T}, \end{equation}
                  and 
                  \begin{equation}
                  \label{est:bf-101-2}
                  \norm[H^s]{\partial_t w_T}\lesssim \norm[H^{s+1}]{\partial_t v_T}\lesssim \norm[H^s]{\partial_t T}; \end{equation}
        \item For every $ T \in L^2(\mathbb T^3) $ and any $ s \geq 0 $, one has that
              \begin{equation}
                  \label{est:bf-102}
                  \norm[H^{s+1}]{\partial_z v_T}\lesssim \norm[H^s]{T}.
              \end{equation}
        % \item The following estimate holds \begin{equation}
        %           \label{est:bf-101-2}
        %           \norm[H^{-1}_x]{\partial_t w_T}\lesssim \norm[2]{\partial_t v_T}\lesssim \norm[H^{-1}_x]{\partial_t T}.
        %       \end{equation}
        %       % \item $\norm[H^{-1}_x]{w_T}\lesssim\norm[2]{v_T}\lesssim \norm[H^{-1}_x]{T}$.
    \end{enumerate}
\end{proposition}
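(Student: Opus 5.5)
Since \eqref{sys:BF} is linear with constant coefficients on $\mathbb T^3$, I will solve it explicitly in Fourier variables and read both estimates off the resulting symbols. Write $k=(k_h,k_3)\in\mathbb Z^3\setminus\{0\}$, using the mean-zero normalization \eqref{asmp:homogeneous-variables}. The parity conditions \eqref{asmp:symetry} make $T$ odd in $z$, so $\hat T(k_h,0)=0$ and only modes with $k_3\neq 0$ carry temperature. For $k_3\ne0$, the relation $\partial_z p_T=-T$ gives $\hat p_T=-\hat T/(ik_3)$. Set $\mu=|k|^2$, so that the first equation becomes $\mu\hat v_T+\hat v_T^\perp=-ik_h\hat p_T=(k_h/k_3)\hat T$.

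The key step is that the $2\times2$ matrix $A=\mu I+J$ (with $J$ the rotation by $\pi/2$, $v^\perp=Jv$) is invertible with $A^{-1}=(\mu I-J)/(\mu^2+1)$. Hence
\[
\hat v_T=\frac{(\mu I-J)k_h}{(\mu^2+1)k_3}\,\hat T,\qquad |\hat v_T|\lesssim \frac{|k_h|}{|k_3|\,|k|^2}|\hat T|.
\]
For $k_3=0$ I take $\hat v_T=0$, which is consistent because the forcing vanishes there. Then $\hat w_T=-ik_h\cdot\hat v_T/(ik_3)$. The term involving $Jk_h$ drops out because $k_h\cdot Jk_h=0$, which leaves $\hat w_T=-\mu|k_h|^2\hat T/((\mu^2+1)k_3^2)$. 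This construction proves well-definedness, and the solution is unique within the symmetric mean-zero class.

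The delicate point is the factor $1/k_3$, which I expect to be the main obstacle: it could blow up relative to $|k|$ when $|k_h|\gg|k_3|$. On $\mathbb T^3$ this is harmless because $|k_3|\ge1$ on the support, but it means the gain must come from $|k|^{-2}$ and not from $k_3$. Concretely, $|k_h|/(|k_3||k|^2)\le |k|^{-1}$ gives $(1+|k|)|\hat v_T|\lesssim|\hat T|$, hence $\norm[H^{s+1}]{v_T}\lesssim\norm[H^s]{T}$ for every $s\ge -1$ (at $s=-1$ in the dual-norm sense).

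For $w_T$ I will not use the formula directly. Instead I will use $w_T=-\partial_z^{-1}\nabla_h\cdot v_T$, i.e.\ $|\hat w_T|=|k_h\cdot\hat v_T|/|k_3|\le|k||\hat v_T|$, which yields $\norm[H^s]{w_T}\lesssim\norm[H^{s+1}]{v_T}$. Since the map is linear and time-independent, it commutes with $\partial_t$, and \eqref{est:bf-101-2} follows verbatim by applying the same symbol bounds to $\partial_t T$.

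For \eqref{est:bf-102}, multiplying $\hat v_T$ by $ik_3$ cancels the $1/k_3$ and gives $|k_3\hat v_T|\lesssim |k_h|\mu/(\mu^2+1)\,|\hat T|\lesssim |k|^{-1}|\hat T|$. This bound is in fact the same order as before, namely $(1+|k|)^{s+1}|k_3\hat v_T|\lesssim (1+|k|)^s|\hat T|$. The stated estimate therefore holds, and it stays valid on channel domains where $k_3$ may not be bounded below; this motivates stating it separately.
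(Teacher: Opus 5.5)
Your Fourier-multiplier proof is correct, but it takes a different route from the paper's.

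\textbf{The paper's route.} The paper argues by energy estimates. It tests the first two equations of (BF) with $v_T$ and $w_T$. The pressure drops out by incompressibility and the rotation term by skew-symmetry, which gives $\|\nabla v_T\|_2^2=-(T,w_T)$. Combining this with the vertical-integration bound $\|w_T\|_2\le\|\nabla v_T\|_2$, which comes from $w_T=-\int_0^z\nabla_h\cdot v_T\,d\zeta$, yields the case $s=0$. Testing with $(-\Delta)^{s}v_T$ and $(-\Delta)^{s}w_T$ handles the other integers, including $s=-1$. Part (ii) follows by applying $\partial_z$ to the first equation, substituting $\partial_z p_T=-T$, and testing $-\Delta\partial_z v_T=\nabla_h T-(\partial_z v_T)^\perp$ with $\partial_z v_T$.

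\textbf{Your route.} You diagonalize mode by mode, and your symbol computations all check out: $A^{-1}=(\mu I-J)/(\mu^2+1)$, $k_h\cdot Jk_h=0$, $|\hat w_T|\le|k|\,|\hat v_T|$, and the exact modulus $|k_3\hat v_T|=|k_h|\,|\hat T|/\sqrt{\mu^2+1}\le|k|^{-1}|\hat T|$. The last is comparable to your $|k_h|\mu/(\mu^2+1)$ because $\mu\ge1$.

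\textbf{What each approach buys.} Your argument proves the well-definedness that the paper only asserts. It shows that oddness of $T$ is exactly the solvability condition $\hat T(k_h,0)=0$ for $\partial_z p_T=-T$. It treats every $s\ge-1$ uniformly, rather than through the sketched $(-\Delta)^{s}$ testing. It also exposes the anisotropy behind (ii): on $\mathbb T^3$ the $v_T$ bound in (i) follows from the bound on $k_3\hat v_T$ simply by using $|k_3|\ge1$. The paper's argument, in turn, uses only structural cancellations: pressure against incompressibility, skew-symmetric rotation, and vertical integration. It therefore carries over to channels or bounded domains with physical boundary conditions, where no explicit symbol is available. Its identity $\|\nabla v_T\|_2^2=-(T,w_T)$ is also the mechanism behind the later free-flow estimates.

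\textbf{One unwritten check.} Your uniqueness claim needs a verification at $k_3=0$, because evenness of $v_T$ does not remove those modes. There, $(\mu I+J)\hat v_T=-ik_h\hat p_T$ together with $k_h\cdot\hat v_T=0$ gives $\mu|k_h|^2\hat p_T=0$. Hence $\hat p_T=0$ and $\hat v_T=0$ for $k_h\neq0$, while $\hat w_T(k_h,0)=0$ follows from oddness. So the claim holds.
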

\begin{proof}
    \begin{enumerate}[label=(\roman*)]
        \item {When $s=0$}, we take the $L^2(\mathbb{T}^3)$ inner product of $ \eqref{sys:BF}_1 $ and $\eqref{sys:BF}_2$ with $v_T$ and $w_T$, respectively, add the resultants together, and integrate by parts using $ \eqref{sys:BF}_3 $,
              \begin{equation}
                  \label{est:bf-001}
                  \norm[2]{\grad{v_T}}^2=-(T,w_T)\leq\norm[2]{T}\norm[2]{w_T},
              \end{equation}
              where we have applied H\"older's inequality.
              $\eqref{sys:BF}_3$, together with \eqref{asmp:bc}, also implies
              \begin{align}
                  \label{wt-bound-vt-0}
                  w_T=-\int_0^z \grad_h{}\cdot v_T(\zeta)\,d\zeta, \\
                  \intertext{and thus}\label{wt-bound-vt}
                  \norm[2]{w_T}\leq \norm[2]{\grad{v_T}}.
              \end{align}
              Applying Young's inequality in \eqref{est:bf-001} then implies
              \begin{equation}
                  \label{est:bf-002}
                  \norm[2]{w_T}\leq \norm[2]{\grad{v_T}}\lesssim \norm[2]{T}.
              \end{equation}
              Thanks to \eqref{asmp:homogeneous-variables}, applying the Poincaré inequality in \eqref{est:bf-002} yields the desired bounds.

              \smallskip

              When $s \geq 1$, \eqref{est:bf-101} is a consequence of the linear structure of the equations. Indeed, taking the $L^2(\mathbb{T}^3)$ inner product of $\eqref{sys:BF}_1 $ and $ \eqref{sys:BF}_2 $ with $(-\Delta)^{s}v_T$ and $(-\Delta)^{s}w_T$, respectively, and applying similar arguments as above lead to the desired bounds.

              \smallskip

              When $s=-1$, we alternatively take the $L^2(\mathbb{T}^3)$ inner product of $ \eqref{sys:BF}_1 $ and $ \eqref{sys:BF}_2$ with $(-\Delta)^{-1}v_T$ and $(-\Delta)^{-1}w_T$, respectively, add the resultants together, and integrate by parts using $ \eqref{sys:BF}_3 $,
              \begin{equation}
                  \label{est:bf-003}
                  \norm[2]{v_T}^2=-(T,(-\Delta)^{-1}w_T)\leq \norm[H^{-1}]{T}\norm[H^1]{(-\Delta)^{-1}w_T} \leq\norm[H^{-1}]{T}\norm[H^{-1}]{w_T},
              \end{equation}
              which implies the desired bounds after utilizing \eqref{wt-bound-vt-0}.

              The proof of \eqref{est:bf-101-2} follows immediately after noticing that the time derivative of $ v_T, w_T, T $ satisfy the same equations as $ v_T,w_T,T $. 

        \item For $s = 0 $, taking $\partial_z$ of $\eqref{sys:BF}_1$ yields, thanks to $ \eqref{sys:BF}_2 $,
              \begin{equation}
                  \label{est:bf-004}
                  -\Delta\partial_z v_T=-\grad_h{\partial_z p_T}-(\partial_z v_T)^\perp=\grad_h{T}- (\partial_z v_T)^\perp.
              \end{equation}
              Now taking the $L^2(\mathbb{T}^3)$ inner product of \eqref{est:bf-004} with $\partial_z v_T$, integrating by parts, and applying H\"older's inequality implies
              \begin{equation}
                  \label{est:bf-005}
                  \norm[2]{\grad{\partial_z v_T}}^2 =(\grad_h{T},\partial_z v_T)=-(T,\grad_h{}\cdot \partial_z v_T)\leq \norm[2]{T}\norm[2]{\grad{\partial_z v_T}},
              \end{equation}
              from which the desired bound follows after using the Poincaré inequality.

              The case when $ s > 0 $ follows similarly. 
        % \item The proof is practically identical to the $s=-1$ case of (i) after taking $\partial_t$ of $\eqref{sys:BF} $.
    \end{enumerate}
\end{proof}

In addition to Proposition \eqref{bf-est}, we also provide the following estimate for $ T $:
\begin{proposition}[Temperature estimates]\label{temp-est}
    Assume that $ T $ solves $ \eqref{sys:BE}_4 $, or equivalently  $ \eqref{sys:FF}_4 $, with given $ Q $ and $ v_F,w_F $ in the proper space.
    The following uniform-in-$\varepsilon$ estimates hold:
    \begin{enumerate}[label=(\roman*)]
        \item For all $ \vs{T} \in (0,\infty) $, one has that
              \begin{equation}
                  \label{est:tmp-101}
                  \sup\limits_{t\in [0,\vs{T}]}\norm[\infty]{T(t)}\leq \norm[\infty]{T_0}+\int_0^\vs{T}\norm[\infty]{Q(t)}\,dt;
              \end{equation}
        \item For all $ \mathcal T \in (0,\infty) $, one has that
              \begin{equation}
                  \label{est:tmp-102}
                  \sup\limits_{t\in [0,\vs{T}]}\norm[2]{T(t)}^2+\int_0^\vs{T} \norm[2]{\grad{T}(t)}^2\,dt\leq \norm[2]{T_0}^2+\int_0^\vs{T}\norm[2]{Q(t)}^2\,dt;
              \end{equation}
        \item One has that
              % \begin{equation}
              %           \label{est:tmp-103}
              %           \norm[H^{-1}_x]{\partial_t T}\lesssim \norm[2]{\grad_3{T}}+\norm[\infty]{T}\norm[2]{\grad_3{T}}+\norm[6]{T}\norm[3]{v_F,w_F},
              %       \end{equation}
              %       and
              \begin{equation}
                  \label{est:tmp-103}
                  \norm[H^{-1}]{\partial_t T}\lesssim \norm[2]{Q}+\norm[2]{\grad{T}}+\norm[\infty]{T}\norm[2]{\grad{T}}+\norm[\infty]{T}\norm[2]{v_F,w_F}.
              \end{equation}
    \end{enumerate}
\end{proposition}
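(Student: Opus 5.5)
The temperature equation $\eqref{sys:BE}_4$ is a linear advection--diffusion equation, with $\varepsilon$ entering only through the velocity $u:=(v,w)$. Since $\nabla_h\cdot v+\partial_z w=0$, the velocity is divergence free, so every estimate below is independent of $\varepsilon$. I would first argue formally for smooth solutions and then pass to weak solutions through the approximation scheme (Galerkin or mollification) of section \ref{sec:well-posedness}. Note that $u = u_T + u_F$, and the splitting only matters for item (iii).

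For (i), I would fix $p\in[2,\infty)$, multiply $\eqref{sys:BE}_4$ by $|T|^{p-2}T$ and integrate over $\mathbb T^3$. The advection term $\int u\cdot\nabla T\,|T|^{p-2}T = \frac1p\int u\cdot\nabla|T|^p$ vanishes by incompressibility. The diffusion term $(p-1)\int|T|^{p-2}|\nabla T|^2$ is nonnegative and can be dropped. This gives $\frac1p\frac{d}{dt}\|T\|_p^p\le \|Q\|_p\|T\|_p^{p-1}$, that is, $\frac{d}{dt}\|T\|_p\le\|Q\|_p$. Integrating in time and letting $p\to\infty$ gives \eqref{est:tmp-101}.

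For (ii), I would take $p=2$ but keep the dissipation, which yields $\frac12\frac{d}{dt}\|T\|_2^2+\|\nabla T\|_2^2=(Q,T)$. To bound $(Q,T)$, I would use that $T$ has zero mean, so Poincaré gives $\|T\|_2\le\|\nabla T\|_2$ (the constant is 1 on the unit torus; otherwise the constants absorb it). Then $(Q,T)\le \frac12\|Q\|_2^2+\frac12\|\nabla T\|_2^2$. Absorbing and integrating gives \eqref{est:tmp-102}, possibly up to a harmless factor 2 on the $Q$ term, which can be removed by the sharp Poincaré constant.

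For (iii), I would compute $\partial_t T$ by duality against $\phi\in\dot H^1$ with $\|\nabla\phi\|_2=1$. Integrating by parts and using incompressibility to write the advection as $\nabla\cdot(uT)$ gives
\[
\langle\partial_t T,\phi\rangle = (Q,\phi)-(\nabla T,\nabla\phi)+(T u_F,\nabla\phi)+(T u_T,\nabla\phi).
\]
The first three terms are bounded by $\|Q\|_2+\|\nabla T\|_2+\|T\|_\infty\|v_F,w_F\|_2$, using Poincaré on $\phi$. For the last term I would use $\|u_T\|_2\lesssim\|T\|_2\lesssim\|\nabla T\|_2$, which follows from Proposition \ref{bf-est} (estimate \eqref{est:bf-101} with $s=0$) together with Poincaré for the zero-mean $T$. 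This gives the bound $\|T\|_\infty\|\nabla T\|_2$. The main care is needed here: the $u_T$ contribution has to be charged to $\|\nabla T\|_2$ through the buoyancy map rather than to the velocity. Justifying that the $L^p$ computations hold for weak solutions, via the approximate solutions and lower semicontinuity, is routine.
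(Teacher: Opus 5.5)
Your proposal is correct and follows essentially the same route as the paper: $L^p$ estimates in which the advection term vanishes by incompressibility, followed by $p\to\infty$ for (i); the case $p=2$ with Poincar\'e and Young for (ii); and for (iii), the divergence form $\partial_t T = Q+\nabla\cdot(\nabla T - T(v,w))$, with the buoyancy-flow part $\|T\|_\infty\|(v_T,w_T)\|_2$ bounded by $\|T\|_\infty\|\nabla T\|_2$ via \eqref{est:bf-101} and Poincar\'e. The only difference is cosmetic: the paper bounds $\|Q\|_q\le\|Q\|_\infty$ before integrating in time, whereas you keep $\|Q\|_p$ and take the limit $p\to\infty$ afterwards.
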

\begin{proof}
    \begin{enumerate}[label=(\roman*)]
        \item For all $q>1$, taking the $L^2(\mathbb{T}^3)$ inner product of $ \eqref{sys:BE}_4 $ with $q|T|^{q-2} T$, integrating by parts, and using $ \eqref{sys:BE}_3 $ leads to
              \begin{equation}
                  \label{est:tmp-001}
                  \begin{gathered}
                      \frac{d}{dt}\norm[q]{T}^q+q(q-1)\int_{\mathbb{T}^3}|T|^{q-2}|\grad{T}|^2\,d\mathbf{x} \\ =q\int_{\mathbb{T}^3}|T|^{q-2}TQ\,d\mathbf{x}
                      \leq q\norm[q]{T}^{q-1}\norm[q]{Q}
                      \leq q\norm[q]{T}^{q-1}\norm[\infty]{Q}.
                  \end{gathered}
              \end{equation}
              Hence, one has that
              \begin{equation}
                  \label{est:tmp-002}
                  q\norm[q]{T}^{q-1}\frac{d}{dt}\norm[q]{T}\leq q\norm[q]{T}^{q-1}\norm[\infty]{Q},
              \end{equation}
              which implies
              \begin{equation}\label{est:tmp-003}
                  \norm[q]{T(t)}\leq \norm[q]{T_0}+\int_0^t\norm[\infty]{Q(s)}\,ds.
              \end{equation}
              Sending $q\to \infty$ and taking the supremum over $t\in [0,\vs{T}]$ yield the desired bound.
        \item
              % Taking the $L^2_x$ inner product of $ \eqref{sys:FF}_4 $ with $T$, integrating by parts, and using $ \eqref{sys:BE}_3 $ imply
              Consider \eqref{est:tmp-001} with $ q = 2 $, and apply H\"older's inequality, the Poincar\'e inequality thanks to \eqref{asmp:bc}, and Young's inequality. One has that
              \begin{equation}
                  \label{est:tmp-004}
                  \begin{gathered}
                      \frac{d}{dt}\norm[2]{T}^2+ 2 \norm[2]{\grad{T}}^2 = 2 (Q,T)\leq 2 \norm[2]{Q}\norm[2]{T} \\
                      \leq 2 \norm[2]{Q}\norm[2]{\grad{T}} \leq \norm[2]{Q}^2+\norm[2]{\grad{T}}^2.
                  \end{gathered}
              \end{equation}
              Integrating \eqref{est:tmp-004} in time yields the desired bound.
        \item Thanks to $\eqref{sys:BE}_3$, $ \eqref{sys:BE}_4 $ can be written as
              \begin{equation}
                  \label{est:tmp-005}
                  \partial_t T=Q+\grad{}\cdot (\grad{T}-T(v,w)).
              \end{equation}
              Hence, one has that
              \begin{equation}
                  \label{est:tmp-006}
                  \begin{gathered}
                      \norm[H^{-1}]{\partial_t T}\lesssim \norm[H^{-1}]{Q}+\norm[2]{\grad{T}-T(v,w)}
                      \lesssim \norm[2]{Q}+\norm[2]{\grad{T}}+\norm[2]{T(v,w)}\\
                      \leq \norm[2]{Q}+\norm[2]{\grad{T}}+\norm[\infty]{T}\norm[2]{v_T,w_T}+\norm[\infty]{T}\norm[2]{v_F,w_F} \\
                      \overset{\eqref{est:bf-101}}{\lesssim} \norm[2]{Q}+\norm[2]{\grad{T}}+\norm[\infty]{T}\norm[2]{\grad{T}}+\norm[\infty]{T}\norm[2]{v_F, w_F},
                  \end{gathered}
              \end{equation}
              where we have also applied Poincar\'e's inequality.
    \end{enumerate}
\end{proof}

\section{Estimates of the free flow}
\label{sec:est-freeflow}

What remains is to establish the estimates for the free flow, dominated by system \eqref{sys:FF}. Throughout this section, all calculations are formal. We will outline the justification in section \ref{subsec:approximation}, below.

Taking the $L^2(\mathbb{T}^3)$ inner product of $ \eqref{sys:FF}_1 $ and $ \eqref{sys:FF}_2 $ with $v_F$ and $\varepsilon^2 w_F$, respectively, integrating by parts, and adding the resultants together, thanks to $ \eqref{sys:FF}_3 $, leads to
% \begin{align*}
%     \begin{cases}
%         \frac{1}{2}\frac{d}{dt}\norm[2]{v_F}^2-\varepsilon^{-\alpha}(p_F,\grad{}\cdot v_F)+\varepsilon^{-\alpha}(\nu_h\norm[2]{\grad{v_F}}^2+\nu_v\norm[2]{\partial_z v_F}^2) &\\
%         \hspace{0.8cm}= -(\partial_t v_T,v_F)-(v\cdot\grad{v_T},v_F)-(w\partial_z v_T,v_F), &\\
%         \varepsilon^2\frac{1}{2}\frac{d}{dt}\norm[2]{w_F}^2-\varepsilon^{-\alpha}(p_F,\partial_z w_F)+\varepsilon^{2-\alpha}(\nu_h\norm[2]{\grad{w_F}}^2+\nu_v\norm[2]{\partial_z w_F}^2) &\\
%         \hspace{0.8cm}=-\varepsilon^2(\partial_t w_T,w_F)-\varepsilon^2(v\cdot\grad{w_T},w_F)-\varepsilon^2(w\partial_z w_T,w_F) &\\
%         \hspace{1.2cm}-\varepsilon^{2-\alpha}(\nu_h(\grad{w_T},\grad{w_F})+\nu_v(\partial_z w_T,\partial_z w_F)). &\\
%     \end{cases}
% \end{align*}
% Adding the equations together and using (FF3),
\begin{equation}
    \label{est:ff-000}
    \begin{gathered}
        \frac{1}{2}\frac{d}{dt}\norm[2]{v_F,\varepsilon w_F}^2 +\varepsilon^{-\alpha}\norm[2]{\grad{v_F}}^2+\varepsilon^{2-\alpha}\norm[2]{\grad{w_F}}^2
        = -(\partial_t v_T,v_F)-\varepsilon^2(\partial_t w_T,w_F) \\
        -(v\cdot\grad_h{v_T},v_F)-(w\partial_z v_T,v_F)
        -\varepsilon^2(v\cdot\grad_h{w_T},w_F)-\varepsilon^2(w\partial_z w_T,w_F)
        \\ -\varepsilon^{2-\alpha}(\grad{w_T},\grad{w_F})
        %     .
        % \leq |(\partial_t v_T,v_F)|+\varepsilon^2|(\partial_t w_T,w_F)|+|(v\cdot\grad{v_T},v_F)|+|(w\partial_z v_T,v_F)|\\
        % \hspace{0.8cm}+\varepsilon^2|(v\cdot\grad{w_T},w_F)|+\varepsilon^2|(w\partial_z w_T,w_F)|\\
        % \hspace{0.8cm}+\varepsilon^{2-\alpha}(\nu_h|(\grad{w_T},\grad{w_F})|+\nu_v|(\partial_z w_T,\partial_z w_F)|)\\
        =:\sum_{j=1}^7 I_j.
    \end{gathered}
\end{equation}

\paragraph{Estimate of $I_1$.} Applying H\"older's inequality and Poincar\'e's inequality, one has that
\begin{equation}
    \label{est:ff-001}
    \begin{aligned}
        I_1 & \leq |(\partial_t v_T,v_F)|\leq \norm[2]{\partial_t v_T}\norm[2]{v_F}
        \overset{\eqref{est:bf-101-2}}{\lesssim} \norm[H^{-1}]{\partial_t T}\norm[2]{v_F}\\
        & \overset{\eqref{est:tmp-103}}{\lesssim} \norm[2]{Q}\norm[2]{\grad{v_F}}+\norm[2]{\grad{T}}\norm[2]{\grad{v_F}}+\norm[\infty]{T}\norm[2]{\grad{T}}\norm[2]{\grad{v_F}}+\norm[\infty]{T} \underbrace{\norm[2]{v_F,w_F}}_{\mathclap{\lesssim \norm[2]{\grad{v_F}}}} \norm[2]{v_F}.
    \end{aligned}
\end{equation}
where we have applied, thanks to $ \eqref{sys:FF}_{3} $ and Minkowski's inequality, that
\begin{equation}
    \label{est:ff-incomp}
    \norm[2]{w_F} = \norm[2]{\int_0^z \grad_h{}\cdot v_F(\zeta) \,d\zeta} \leq \norm[2]{\grad{v_F}}.
\end{equation}
Therefore, applying Young's inequality in \eqref{est:ff-001} implies that, for some constant $C\in (0,\infty)$,
\begin{equation}
    \label{est:ff-002}
    \begin{gathered}
        I_1 \leq \frac{1}{100}\varepsilon^{-\alpha}\norm[2]{\grad{v_F}}^2 + \varepsilon^\alpha C\norm[2]{Q}^2 +\varepsilon^\alpha C\norm[2]{\grad{T}}^2+\varepsilon^\alpha C\norm[\infty]{T}^2\norm[2]{\grad{T}}^2\\
        +\varepsilon^{\alpha} C\norm[\infty]{ T}^2\norm[2]{v_F}^2 .
    \end{gathered}
\end{equation}

\paragraph{Estimate of $I_2$.} Similarly to $I_1$, applying H\"older's inequality and Poincar\'e's inequality, one has that
\begin{equation}
    \label{est:ff-003}
    \begin{aligned}
        I_2 &\leq \varepsilon^2|(\partial_t w_T,w_F)| \leq \varepsilon^2\norm[H^{-1}]{\partial_t w_T}\norm[2]{\grad{w_F}}
        \overset{\eqref{est:bf-101-2}}{\lesssim} \varepsilon^2\norm[H^{-1}]{\partial_t T}\norm[2]{\grad{w_F}} \\
        &\overset{\eqref{est:tmp-103}}{\lesssim} \varepsilon^2\norm[2]{Q}\norm[2]{\grad{w_F}}+\varepsilon^2\norm[2]{\grad{T}}\norm[2]{\grad{w_F}}
         +\varepsilon^2 \norm[\infty]{T}\norm[2]{\grad{T}}\norm[2]{\grad{w_F}}\\
         &\qquad\qquad + \varepsilon^2 \norm[\infty]{T}\norm[2]{v_F,w_F} \norm[2]{\grad{w_F}}.
        % +\varepsilon^2 \norm[6]{T}\underbrace{\norm[3]{v_F, w_F}}_{\mathclap{\lesssim \norm[2]{v_F}^{1/2}\norm[2]{\nabla_3 v_F}^{1/2} + \norm[2]{w_F}^{1/2}\norm[2]{\nabla_3 w_F}^{1/2}}}\norm[2]{\grad_3{w_F}}
    \end{aligned}
\end{equation}
Therefore, applying Young's inequality in \eqref{est:ff-003} implies that, for some constant $ C \in (0,\infty) $,
\begin{equation}
    \label{est:ff-004}
    \begin{gathered}
        I_2 \leq \frac{1}{100} \varepsilon^{-\alpha} \norm[2]{\varepsilon \grad{w_F}}^2 + \varepsilon^{\alpha+2}C\norm[2]{Q}^2+ \varepsilon^{\alpha+2}C\norm[2]{\grad{T}}^2+\varepsilon^{\alpha+2}C\norm[\infty]{T}^2\norm[2]{\grad{T}}^2\\ 
        + \varepsilon^{\alpha+2} \norm[\infty]{T}^2 \norm[2]{v_F, w_F}^2.
    \end{gathered}
\end{equation}

\paragraph{Estimate of $I_3$.} Notice that $ v = v_T + v_F $. We write $I_3$ as
\begin{equation}
    \label{est:ff-005}
    I_3 = (v\cdot\grad_h{v_T},v_F) = (v_T\cdot\grad_h{v_T},v_F)+(v_F\cdot\grad_h{v_T},v_F) =: I_{3,1}+I_{3,2}.
\end{equation}
Applying H\"older's inequality, the Gagliardo-Nirenberg inequality, and Poincar\'e's inequality, one has that
\begin{equation}
    \label{est:ff-006}
    % \begin{gathered}
    I_{3,1} \lesssim \norm[6]{v_T}\norm[2]{\grad{v_T}}\norm[3]{v_F} \lesssim \norm[2]{\grad{v_T}}^2\norm[2]{\grad{v_F}}
    \overset{\eqref{est:bf-101}}{\lesssim} \norm[2]{T}\norm[2]{\grad{T}}\norm[2]{\grad{v_F}}.
    % \end{gathered}
\end{equation}
Similarly, we have
\begin{equation}
    \label{est:ff-007}
    % \begin{gathered}
    I_{3,2} \lesssim\norm[4]{v_F}^2\norm[2]{\nabla v_T} \overset{\eqref{est:bf-101}}{\lesssim} \norm[2]{T}\norm[2]{v_F}^{1/2}\norm[2]{\grad{v_F}}^{3/2}.
    % \end{gathered}
\end{equation}
Therefore, applying Young's inequality in \eqref{est:ff-006} and \eqref{est:ff-007} implies that, for some constant $C\in (0,\infty)$,
\begin{equation}
    \label{est:ff-008}
    I_3 \leq \frac{1}{100} \varepsilon^{-\alpha}\norm[2]{\grad{v_F}}^2 + \varepsilon^\alpha C \norm[2]{T}^2\norm[2]{\grad{T}}^2 + \varepsilon^{3\alpha} C\norm[2]{T}^2\norm[2]{\grad{T}}^2 \norm[2]{v_F}^2,
\end{equation}
where we have also applied Poincar\'e's inequality.

% We have, applying Proposition \ref{bf-est},
% \begin{align*}
%     I_{3} & \leq |(v_T\cdot\grad{v_T},v_F)|+|(v_F\cdot\grad{v_T},v_F)|                                                                                                                                                      \\
%           & \leq \norm[6]{v_T}\norm[2]{\grad_3{v_T}}\norm[3]{v_F}+\norm[4]{v_F}^2\norm[2]{\grad_3{v_T}}                                                                                                                     \\
%           & \lesssim \norm[H^1_x]{v_T}^2\norm[2]{v_F}^\frac{1}{2}\norm[2]{\grad_3{v_F}}^\frac{1}{2}+\norm[2]{T}^\frac{1}{2}\norm[2]{\grad_3{T}}^\frac{1}{2}\norm[2]{v_F}^\frac{1}{2}\norm[2]{\grad_3{v_F}}^\frac{3}{2}      \\
%           & \lesssim \norm[2]{T}\norm[2]{\grad_3{T}}\norm[2]{\grad_3{v_F}}+\norm[2]{T}^\frac{1}{2}\norm[2]{\grad_3{T}}^\frac{1}{2}\norm[2]{v_F}^\frac{1}{2}\norm[2]{\grad_3{v_F}}^\frac{3}{2}                               \\
%           & \leq \varepsilon^\alpha C\norm[2]{T}^2\norm[2]{\grad_3{T}}^2+\varepsilon^{3\alpha}C\norm[2]{T}^2\norm[2]{\grad_3{T}}^2\norm[2]{v_F}^2+\varepsilon^{-\alpha}\frac{\nu_\mathrm{min}}{10}\norm[2]{\grad_3{v_F}}^2.
% \end{align*}

\paragraph{Estimate of $I_4$.} Similarly to \eqref{wt-bound-vt-0}, thanks to $ \eqref{sys:BE}_3 $, one has that
\begin{equation}
    \label{est:ff-009}
    w = - \int_0^z \grad_h{} \cdot v(\zeta) \,d\zeta.
\end{equation}
Therefore, one can write, after applying H\"older's inequality and Minkowski's inequality,
\begin{equation}
    \label{est:ff-010}
    \begin{gathered}
        I_4 = (\int_0^z \grad_h{}\cdot  v(\zeta) \,d\zeta\, \partial_z v_T, v_F) \leq  \int_0^1\int_{\mathbb{T}^2}\biggl(\int_0^1 |\grad_h{}\cdot v(\zeta)|\,d\zeta\biggr)|\partial_z v_T|\,|v_F|\,dxdy\,dz\\
        \leq \int_0^1\left(\int_0^1 \norm[L^2(\mathbb{T}^2)]{\grad{v}(\zeta)}\,d\zeta\right)\norm[L^4(\mathbb{T}^2)]{\partial_z v_T(z)}\norm[L^4(\mathbb{T}^2)]{v_F(z)}\,dz.
    \end{gathered}
\end{equation}
Now, applying the Gagliardo-Nirenberg inequality in $ \mathbb T^2 $ implies that, for each $ z \in \mathbb T $,
\begin{equation}
    \label{est:ff-011}
    \begin{gathered}
        \norm[L^4(\mathbb{T}^2)]{\partial_z v_T(z)}\lesssim \norm[L^2(\mathbb{T}^2)]{\partial_z v_T(z)}^{1/2} \norm[L^2(\mathbb{T}^2)]{\grad_h{\partial_z v_T(z)}}^{1/2}, \\
        \norm[L^4(\mathbb{T}^2)]{v_F(z)} \lesssim \norm[L^2(\mathbb{T}^2)]{v_F(z)}^{1/2} \norm[L^2(\mathbb{T}^2)]{\grad_h{v_F(z)}}^{1/2}.
    \end{gathered}
\end{equation}
Thus, substituting \eqref{est:ff-011} into \eqref{est:ff-010} yields, after applying H\"older's inequality,
\begin{equation}
    \label{est:ff-012}
    \begin{aligned}
        I_4 & \lesssim \norm[2]{\grad{v}} \int_0^1  \norm[L^2(\mathbb{T}^2)]{\partial_z v_T(z)}^{1/2} \norm[L^2(\mathbb{T}^2)]{\grad_h{\partial_z v_T(z)}}^{1/2} \norm[L^2(\mathbb{T}^2)]{v_F(z)}^{1/2} \norm[L^2(\mathbb{T}^2)]{\grad_h{v_F(z)}}^{1/2}\,dz                 \\
            & \lesssim \norm[2]{\grad{v_T}}\norm[2]{\partial_z v_T}^\frac{1}{2}\norm[2]{\grad{\partial_z v_T}}^\frac{1}{2}\norm[2]{v_F}^\frac{1}{2}\norm[2]{\grad{v_F}}^\frac{1}{2}                       \\
            & \qquad + \norm[2]{\grad{v_F}}\norm[2]{\partial_z v_T}^\frac{1}{2}\norm[2]{\grad{\partial_z v_T}}^\frac{1}{2}\norm[2]{v_F}^\frac{1}{2}\norm[2]{\grad{v_F}}^\frac{1}{2} =: I_{4,1} + I_{4,2},
    \end{aligned}
\end{equation}
where we have used the fact that $ v = v_T + v_F $.
To estimate $ I_{4,1} $ and $ I_{4,2} $, applying \eqref{est:bf-101} and \eqref{est:bf-102} leads to
\begin{equation}
    \label{est:ff-013}
    \begin{aligned}
        I_{4,1} & \lesssim \norm[2]{T}^{2} \norm[2]{v_F}^{1/2} \norm[2]{\grad{v_F}}^{1/2}, \\
        I_{4,2} & \lesssim \norm[2]{T}\norm[2]{v_F}^{1/2}\norm[2]{\grad{v_F}}^{3/2}.
    \end{aligned}
\end{equation}
Therefore, substituting \eqref{est:ff-013} into \eqref{est:ff-012} and applying Young's inequality implies that, for some constant $ C \in(0,\infty) $,
\begin{equation}
    \label{est:ff-014}
    % \begin{aligned}
    I_4 \leq \frac{1}{100} \varepsilon^{-\alpha} \norm[2]{\grad{v_F}}^2 + \varepsilon^\alpha C \norm[2]{T}^2 \norm[2]{\grad{T}}^2 + \varepsilon^{3\alpha}  \norm[2]{T}^2 \norm[2]{\grad{T}}^2 \norm[2]{v_F}^2,
    % \end{aligned}
\end{equation}
where we have also applied Poincar\'e's inequality.

% \begin{equation}
%     \begin{gathered}
%         \lesssim (\norm[2]{\grad_3{v_T}}+\norm[2]{\grad_3{v_F}})\norm[2]{T}\norm[2]{v_F}^\frac{1}{2}\norm[2]{\grad_3{v_F}}^\frac{1}{2}                                                                                                                                \\
%         \lesssim \norm[2]{T}\norm[2]{\grad_3{T}}\norm[2]{\grad_3{v_F}}+\norm[2]{T}^\frac{1}{2}\norm[2]{\grad_3{T}}^\frac{1}{2}\norm[2]{v_F}^\frac{1}{2}\norm[2]{\grad_3{v_F}}^\frac{3}{2}                                                                             \\
%         \leq \varepsilon^\alpha C\norm[2]{T}^2\norm[2]{\grad_3{T}}^2+\varepsilon^{3\alpha}C\norm[2]{T}^2\norm[2]{\grad_3{T}}^2\norm[2]{v_F}^2+\varepsilon^{-\alpha}\frac{\nu_\mathrm{min}}{10}\norm[2]{\grad_3{v_F}}^2.
%     \end{gathered}
% \end{equation}

\paragraph{Estimate of $I_5$.} We write $ I_5 $ as
\begin{equation}
    \label{est:ff-015}
    I_5 = - \varepsilon^2(v_T \cdot \grad_h{w_T}, w_F) - \varepsilon^2 (v_F\cdot \grad_h{w_T}, w_F) =: I_{5,1} + I_{5,2}.
\end{equation}
Using \eqref{wt-bound-vt-0}, one can write, after applying H\"older's inequality and Minkowski's inequality,
\begin{equation}
    \label{est:ff-016}
    \begin{gathered}
        I_{5,1} = \varepsilon^2 (v_T\cdot \grad_h{}
        (\int_0^z \grad_h{}\cdot v_{T}(\zeta) \,d\zeta),w_F) \lesssim \varepsilon^2 \int_0^1 \int_{\mathbb{T}^2}|v_T(z)|\left(\int_0^1 |\grad_h{}\grad_h{}\cdot v_T(\zeta)|\,d\zeta\right)|w_F(z)|\,dxdy\,dz \\
        \lesssim \varepsilon^2 \norm[2]{\nabla^2 v_T} \int_0^1 \norm[L^4(\mathbb T^2)]{v_T(z)} \norm[L^4(\mathbb T^2)]{w_F(z)} \,dz,
    \end{gathered}
\end{equation}
Analogously to \eqref{est:ff-011} and \eqref{est:ff-012}, applying the two dimensional Gagliardo-Nirenberg inequality and H\"older's inequality to the right of \eqref{est:ff-016} yields
\begin{equation}
    \label{est:ff-017}
    \begin{aligned}
        I_{5,1} & \lesssim \varepsilon^2 \norm[2]{\nabla^2 v_T} \int_0^1 \norm[L^2(\mathbb T^2)]{v_T(z)}^{1/2} \norm[L^2(\mathbb T^2)]{\grad_h{v_T(z)}}^{1/2} \norm[L^2(\mathbb T^2)]{w_F(z)}^{1/2} \norm[L^2(\mathbb T^2)]{\grad_h{w_F(z)}}^{1/2} \,dz \\
                & \lesssim \varepsilon^2 \norm[2]{\nabla^2 v_T}  \norm[2]{v_T}^{1/2} \norm[2]{\nabla v_T}^{1/2} \norm[2]{w_F}^{1/2} \norm[2]{\nabla w_F}^{1/2}                                                                                            \\
                & \overset{\eqref{est:bf-101}}{\lesssim} \varepsilon^2 \norm[2]{T}\norm[2]{\nabla T}\norm[2]{w_F}^{1/2} \norm[2]{\nabla w_F}^{1/2}.
    \end{aligned}
\end{equation}
Similarly, one has that
\begin{equation}
    \label{est:ff-018}
    \begin{aligned}
        I_{5,2} \lesssim \varepsilon^2 \norm[2]{\nabla T} \norm[2]{v_F}^{1/2} \norm[2]{\nabla v_F}^{1/2} \norm[2]{w_F}^{1/2} \norm[2]{\nabla w_F}^{1/2}.
    \end{aligned}
\end{equation}
Therefore, substituting \eqref{est:ff-017} and \eqref{est:ff-018} into \eqref{est:ff-015} and applying Poincar\'e's inequality and Young's inequality implies that, for some constant $ C \in (0,\infty) $,
\begin{equation}
    \label{est:ff-019}
    % \begin{aligned}
    I_5 \leq \frac{1}{100} \varepsilon^{-\alpha} \norm[2]{\grad{v_F},\varepsilon \grad{w_F}}^2  + \varepsilon^{\alpha+2} C \norm[2]{T}^2 \norm[2]{\grad{T}}^2 + \varepsilon^{\alpha+2} C \norm[2]{\grad{T}}^2 \norm[2]{v_F,\varepsilon w_F}^2.
    % \end{aligned}
\end{equation}

\paragraph{Estimate of $I_6$.} We write $ I_6 $ as
\begin{equation}
    \label{est:ff-020}
    I_6 = - \varepsilon^2 (w_T \partial_z w_T, w_F) - \varepsilon^2 (w_F \partial_z w_T, w_F) =: I_{6,1} + I_{6,2}.
\end{equation}
Using \eqref{wt-bound-vt-0}, one can write, after applying H\"older's inequality, Minkowski's inequality, and the two dimensional Gagliardo-Nirenberg inequality,
\begin{equation}
    \label{est:ff-021}
    \begin{gathered}
        I_{6,1} = - \varepsilon^2 (\int_0^z \grad_h{} \cdot v_T(\zeta)\,d\zeta\, \grad_h{} \cdot v_T, w_F) \lesssim \varepsilon^2 \int_0^1 \int_{\mathbb{T}^2}\left(\int_0^1 |\grad_h{}\cdot v_T(\zeta)|\,d\zeta\right)|\grad_h{}\cdot v_T(z)|\,|w_F(z)|\,dxdy\,dz \\
        \lesssim \varepsilon^2 \norm[2]{\nabla v_T} \int_0^1 \norm[L^4(\mathbb T^2)]{\grad_h{v_T(z)}} \norm[L^4(\mathbb T^4)]{w_F(z)} \,dz \\
        \lesssim \varepsilon^2 \norm[2]{\nabla v_T} \int_0^1 \norm[L^2(\mathbb T^2)]{\grad_h{v_T(z)}}^{1/2} \norm[L^2(\mathbb T^2)]{\grad_h^2{v_T(z)}}^{1/2} \norm[L^2(\mathbb T^2)]{w_F(z)}^{1/2} \norm[L^2(\mathbb T^2)]{\grad_h{w_F(z)}}^{1/2}\,dz \\
        \lesssim \varepsilon^2 \norm[2]{\nabla v_T} \norm[2]{\nabla v_T}^{1/2} \norm[2]{\nabla^2 v_T}^{1/2} \norm[2]{w_F}^{1/2} \norm[2]{\nabla w_F}^{1/2} \\
        \overset{\eqref{est:bf-101}}{\lesssim} \varepsilon^2 \norm[2]{T}^{3/2} \norm[2]{\grad{T}}^{1/2} \norm[2]{w_F}^{1/2} \norm[2]{\nabla w_F}^{1/2}.
    \end{gathered}
\end{equation}
Similar arguments yield that
\begin{equation}
    \label{est:ff-022}
    \begin{gathered}
        I_{6,2} \lesssim \varepsilon^2 \norm[2]{T}^{1/2} \norm[2]{\nabla T}^{1/2} \norm[2]{\nabla v_F} \norm[2]{w_F}^{1/2} \norm[2]{\nabla w_F}^{1/2}.
    \end{gathered}
\end{equation}
Therefore, substituting \eqref{est:ff-021} and \eqref{est:ff-022} into \eqref{est:ff-020} and applying Poincar\'e's inequality and Young's inequality implies that, for some constant $C\in (0,\infty)$,
\begin{equation}
    \label{est:ff-023}
    I_6 \leq \frac{1}{100} \varepsilon^{-\alpha} \norm[2]{\grad{v_F},\varepsilon\grad{w_F}}^2  + \varepsilon^{\alpha+2} C \norm[2]{T}^2 \norm[2]{\grad{T}}^2 + \varepsilon^{3 \alpha+4} C \norm[2]{T}^2 \norm[2]{\grad{T}}^2 \norm[2]{\varepsilon w_F}^2.
\end{equation}

\paragraph{Estimate of $I_7$.} Applying H\"older's inequality, one has that
\begin{equation}
    \label{est:ff-024}
    \begin{gathered}
        I_7\leq \varepsilon^{2-\alpha}|(\grad{w_T},\grad{w_F})| \leq \varepsilon^{2-\alpha} \norm[2]{\grad{w_T}} \norm[2]{\grad{w_F}} \overset{\eqref{est:bf-101}}{\lesssim} \varepsilon^{2-\alpha} \norm[2]{\grad{T}} \norm[2]{\grad{w_F}}.
    \end{gathered}
\end{equation}
Therefore, applying Young's inequality in \eqref{est:ff-024} implies that, for some constant $ C \in (0,\infty) $,
\begin{equation}
    \label{est:ff-025}
    I_7 \leq \frac{1}{100} \varepsilon^{-\alpha} \norm[2]{\varepsilon\grad{w_F}}^2 + \varepsilon^{2-\alpha} C \norm[2]{\grad{T}}^2.
\end{equation}

\paragraph{Summary of estimates.}

Collecting \eqref{est:ff-000}--\eqref{est:ff-025}, we can conclude that, for some constant $ C \in (0,\infty) $, 
\begin{equation}
    \label{est:ff-101}
    \begin{gathered}
        \dfrac{d}{dt} \norm[2]{v_F,\varepsilon w_F}^2 + \varepsilon^{-\alpha} \norm[2]{\grad{v_F}, \varepsilon \grad{w_F}}^2 \leq \varepsilon^{\alpha} C \norm[\infty]{T}^2 \underbrace{\norm[2]{v_F,\varepsilon w_F}^2}_{\mathclap{\lesssim \norm[2]{\grad{v_F},\varepsilon\grad{w_F}}^2}}\\
        + \varepsilon^\alpha C\norm[2]{Q}^2+\varepsilon^{\min\lbrace \alpha,2-\alpha\rbrace} C (1+\norm[2]{T}^2 + \norm[\infty]{T}^2) \norm[2]{\grad{T}}^2 \\
        + \varepsilon^{\min\lbrace 3\alpha,\alpha+2\rbrace} C ( 1+ \norm[2]{T}^2 + \norm[\infty]{T}^2)\norm[2]{\grad{T}}^2 \norm[2]{v_F,\varepsilon w_F}^2,
    \end{gathered}
\end{equation}
where we have applied Poincar\'e's inequality. 

Now we consider $ \varepsilon_0 $ such that, thanks to \eqref{est:tmp-101},
\begin{equation}
    \label{est:ff-102}
    \varepsilon_0^{2\alpha} C \norm[\infty]{T}^2 \leq \varepsilon_0^{2\alpha} C (\norm[\infty]{T_0}+\int_0^\infty \norm[\infty]{Q(t)}\,dt)^2  \leq 1/2.
\end{equation}
Then for all $ \varepsilon \in (0,\varepsilon_0) $, 
applying Gr\"onwall's inequality in \eqref{est:ff-101} implies that, for some $ \mathfrak c_1 \in (0,\infty) $, independent of $ \varepsilon $ and the solution, 
\begin{equation}
    \label{est:ff-103}
    \begin{gathered}
    \sup_{0\leq t < \infty} \norm[2]{v_F(t),\varepsilon w_F(t)}^2 + \varepsilon^{-\alpha} \int_0^\infty \norm[2]{\grad{v_F(t)}, \varepsilon\grad{w_F(t)}}^2 \,dt \leq e^{\varepsilon^{\min\lbrace 3\alpha, \alpha+2 \rbrace}\mathfrak c_1 A} \norm[2]{v_{F,0},\varepsilon w_{F,0}}^2 \\
    + \varepsilon^\alpha \mathfrak{c}_1e^{\varepsilon^{\min\{3\alpha,\alpha+2\}}\mathfrak{c}_1 A}\int_0^\infty \norm[2]{Q(t)}^2\,dt+\varepsilon^{\min\lbrace \alpha, 2 -\alpha \rbrace} \mathfrak c_1 e^{\varepsilon^{\min\lbrace 3 \alpha, \alpha+2\rbrace}\mathfrak c_1 A} A,
\end{gathered}
\end{equation}
where, thanks to \eqref{est:tmp-101} and \eqref{est:tmp-102},
\begin{equation}
    \label{est:ff-104}
    \begin{aligned}
        A &:= \int_0^\infty ( 1 + \norm[2]{T(t)}^2 + \norm[\infty]{T(t)}^2)\norm[2]{\grad{T(t)}}^2  \,dt\\
        &\leq \lbrack 1+\norm[2]{T_0}^2
        + \int_0^\infty\norm[2]{Q(t)}^2\,dt+2\norm[\infty]{T_0}^2 + 2(\int_0^\infty \norm[\infty]{Q(t)}\,dt)^2 \rbrack \\
        &\qquad\qquad\times (\norm[2]{T_0}^2 + \int_0^\infty \norm[2]{Q(t)}^2 \,dt) < \infty. 
    \end{aligned}
\end{equation}

\begin{comment}
    \begin{equation}
        \label{est:ff-104}
        \begin{gathered}
            A := \int_0^\infty ( 1 + \norm[2]{T(t)}^2 + \norm[\infty]{T(t)}^2)\norm[2]{\grad{T(t)}}^2  \,dt \leq \lbrack 1+\norm[2]{T_0}^2
            + \int_0^\infty\norm[2]{Q(t)}^2\,dt+2\norm[\infty]{T_0}^2 + 2(\int_0^\infty \norm[\infty]{Q(t)}\,dt)^2 \rbrack \\
            \times (\norm[2]{T_0}^2 + \int_0^\infty \norm[2]{Q(t)}^2 \,dt) < \infty. 
        \end{gathered}
    \end{equation}
\end{comment}

\section{Proof of theorems \ref{thm:uniform-weak-sol} and \ref{thm:pge-limit}}
\label{sec:proof_of_thm}
We have obtained formally the energy inequality \eqref{est:ff-101}. In this section, we
outline the construction of solutions and prove theorems \ref{thm:uniform-weak-sol} and \ref{thm:pge-limit}.

\subsection{The approximating system and existence of global weak solution with uniform-in-$\varepsilon$ estimate}
\label{subsec:approximation}

Let $ N $ be a positive integer. Define the scalar-valued finite dimensional space
\begin{equation}
    \label{def:finite-d-functional-space}
    V_N := \left\{ f \in L^2(\mathbb T^3)\;:\;\int_{\mathbb{T}^3}f\,d\mathbf{x}=0, \ \text{and} \ \bigl(f,e^{i(k_1 x + k_2 y + k_3 z)}\bigr) = 0 \ \text{for all} \ \vert k_1 \vert + \vert k_2 \vert + \vert k_3 \vert > N \right\}.
\end{equation}
Let \begin{equation}
    \label{projection}
    \mathrm{Proj}_N : L^2(\mathbb T^3) \to V_N
\end{equation}
be the $ L^2 $-projection of $ L^2(\mathbb{T}^3)$ onto $ V_N $, with the convention that $\text{Proj}_N$ acts on vector-valued functions component-wise. For arbitrary $ N \in \mathbb Z^+ $, consider the following approximation system:
\begin{equation}
    \label{sys:N-approximation}
    \begin{cases}
        \grad_h{p_{T,N}} + v_{T,N}^\perp - \Delta v_{T,N} = 0, \\
        \partial_z p_{T,N} + T_{N} = 0,\\
        \grad_h{}\cdot v_{T,N} + \partial_z w_{T,N}=0, \\
        % \\
        \partial_t v_{F,N} + \mathrm{Proj}_N (v_{F,N} \cdot \grad_h{v_{F,N}}) + \mathrm{Proj}_N (w_{F,N} \partial_z v_{F,N}) + \frac{1}{\varepsilon^{\alpha}}(\grad_h{p_{F,N}} + v_{F,N}^\perp - \Delta v_{F,N}) \\
        \qquad   = - \partial_t v_{T,N} - \mathrm{Proj}_N (v_N \cdot \grad_h{v_{T,N}}) - \mathrm{Proj}_N (v_{T,N} \cdot \grad_h{v_{F,N}}) - \mathrm{Proj}_N (w_{N} \partial_z v_{T,N}) - \mathrm{Proj}_N (w_{T,N} \partial_z v_{F,N}),\\
        \partial_t w_{F, N} + \mathrm{Proj}_N (v_{F,N} \cdot \grad_h{w_{F,N}}) + \mathrm{Proj}_N (w_{F,N} \partial_z w_{F,N}) + \frac{1}{\varepsilon^{\alpha+2}} \partial_z p_{F,N} - \frac{1}{\varepsilon^{\alpha}} \Delta w_N \\
        \qquad = - \partial_t w_{T,N} - \mathrm{Proj}_N (v_N \cdot \grad_h{w_{T,N}}) - \mathrm{Proj}_N (v_{T,N} \cdot \grad_h{w_{F,N}}) - \mathrm{Proj}_N (w_N \partial_z w_{T,N}) - \mathrm{Proj}_N (w_{T,N} \partial_z w_{F,N}),\\
        \grad_h{}\cdot v_{F,N} + \partial_z w_{F,N} = 0, \\
        \partial_t T_N  + \mathrm{Proj}_N (v_{F,N} \cdot \grad_h{T_N}) + \mathrm{Proj}_N (w_{F,N} \partial_z T_N) - \Delta T_N \\
        \qquad = Q_N - \mathrm{Proj}_N (v_{T,N} \cdot \grad_h{T_N}) - \mathrm{Proj}_N (w_{T,N} \partial_z T_N),
    \end{cases}
\end{equation}
where $ v_N = v_{T,N} + v_{F,N}, \ w_N = w_{T,N} + w_{F,N} $, $ p_{T,N} $, $ p_{F,N} $, and $ T_N $ are in $ V_N $, and $ Q_N := \mathrm{Proj}_N Q $. Then for any $ N $, one can construct a global weak solution to system \eqref{sys:N-approximation} using the Galerkin scheme, and all estimates in sections \ref{sec:well-posedness} and \ref{sec:est-freeflow} hold for $ (v_{F,N}, w_{F,N}, T_N) $, uniformly-in-$ N, \varepsilon $. Sending $ N \rightarrow \infty $ recovers the {\it a priori} estimates in the aforementioned sections.

\smallskip In particular, for $0<\alpha<2$, there exists $\varepsilon_0\in (0,1)$ and some constant $C\in (0,\infty)$, both independent of $\varepsilon$ and depending only on the initial data and $Q$, such that for all $ \varepsilon \in (0,\varepsilon_0) $,
\begin{equation}
\label{est:unifrom-in-varepsilon-spatial}
\begin{gathered}
\sup_{0\leq t < \infty} \bigl\lbrace \norm[2]{v(t),\varepsilon w(t), v_F(t), \varepsilon w_F(t), T(t), w_T(t)}^2 + \norm[H^1]{v_T(t), \partial_z v_T(t)}^2 + \norm[\infty]{T(t)}^2 \bigr\rbrace \\
 + \int_0^\infty \bigl\lbrace \norm[H^1]{v(t),\varepsilon^{-\alpha/2} v_F(t),\varepsilon^{1-\alpha/2} w_F(t), T(t), w_T(t)}^2 + \norm[H^2]{v_T(t)}^2 + \norm[2]{w(t),\varepsilon^{-\alpha/2}w_F(t)}^2 \bigr\rbrace \,dt < C < \infty,
\end{gathered}
\end{equation}
where we have used \eqref{est:ff-009} to calculate the regularity of $ w, w_F $ from that of $ v, v_F $. 
Thus we have obtained a global weak solution to the intermediate system \eqref{sys:BF}--\eqref{sys:FF}, equivalently, the Boussinesq system \eqref{sys:BE}, satisfying estimate \eqref{est:unifrom-in-varepsilon-spatial}. This finishes the proof of theorem \ref{thm:uniform-weak-sol}.

\subsection{Compactness and the planetary geostrophic approximation}
\label{subsec:compactness}

Thanks to \eqref{est:unifrom-in-varepsilon-spatial}, we have that
\begin{equation}
    \label{est:unifrom-spatial-space}
    \begin{gathered}
        v, v_F, T, w_T \in L^\infty(0,\infty;L^2(\mathbb T^3))\cap L^2(0,\infty;H^1(\mathbb T^3)),\quad v_T \in L^\infty(0,\infty;H^1(\mathbb{T}^3))\cap L^2(0,\infty;H^2(\mathbb T^3)),\\
        w,w_F  \in L^2(0,\infty;L^2(\mathbb T^3))
    \end{gathered}
\end{equation}
uniformly-in-$\varepsilon $. 
In addition, \eqref{est:tmp-103}, \eqref{est:unifrom-in-varepsilon-spatial}, \eqref{est:tmp-101}, and \eqref{est:bf-101-2} imply that %using the equations in systems \eqref{sys:BE} and \eqref{sys:FF}, one has that
\begin{equation}
    \label{est:uniform-temporal-space}
    \partial_t T,\partial_t w_T \in L^2(0,\infty;H^{-1}(\mathbb T^3)), \quad  \partial_t v_T \in L^2(0,\infty; L^2(\mathbb T^3))
\end{equation}
uniformly-in-$\varepsilon$. Therefore, we have the following compactness: for any fixed $ \mathcal T \in (0,\infty) $, upon selecting a subsequence of $ \varepsilon \rightarrow 0 $, 
\begin{align}
\label{limit:weak-1}
    v_T, w_T, T &\rightharpoonup v_p, w_p, T_p & \text{weakly in } & L^2(0,\mathcal T;H^1(\mathbb T^3)),\\
    v_T &\rightharpoonup v_p & \text{weakly in } & L^2(0,\mathcal T; H^2(\mathbb T^3)),\\
    v_T, w_T, T &\overset{*}{\rightharpoonup} v_p, w_p, T_p & \text{weak$*$-ly in } & L^\infty(0,\mathcal T;L^2(\mathbb T^3)),\\
    v_T &\overset{*}{\rightharpoonup} v_p & \text{weak$*$-ly in } & L^\infty(0,\mathcal T;H^1(\mathbb{T}^3)),\\
    v_T, w_T,T & \rightarrow v_p, w_p, T_p & \text{in } & L^2(0,\mathcal T; L^2(\mathbb T^3)), \label{limit:weak-1-5}\\
    v_T & \rightarrow v_p & \text{in } & L^2(0,\mathcal T; H^1(\mathbb T^3)), \label{limit:weak-1-6}
\end{align}
for some $ v_p \in L^\infty(0,\mathcal T;H^1(\mathbb T^3)) \cap L^2(0,\mathcal T;H^2(\mathbb T^3)), \ w_p, T_p \in L^\infty(0,\mathcal T;L^2(\mathbb T^3)) \cap L^2(0,\mathcal T;H^1(\mathbb T^3)) $, where we have applied Aubin-Lions' lemma and the Rellich-Kondrachov theorem to obtain \eqref{limit:weak-1-5} and \eqref{limit:weak-1-6}.
Moreover, from \eqref{est:unifrom-in-varepsilon-spatial}, one has that
\begin{equation}
    \norm[L^2(0,\infty;H^1(\mathbb T^3))]{v_F} + \norm[L^2(0,\infty;L^2(\mathbb T^3))]{w_F} \lesssim \varepsilon^{\alpha/2}.
\end{equation}
Therefore, one has that, upon selecting a subsequence of $ \varepsilon \rightarrow 0 $, 
\begin{align}
    v_F  \rightarrow 0, \quad & v \rightarrow v_p & \text{in } & L^2(0,\mathcal T; H^1(\mathbb T^3)),\\
    \label{limit:strong-4}
    w_F  \rightarrow 0, \quad & w \rightarrow w_p & \text{in } & L^2(0,\mathcal T; L^2(\mathbb T^3)).
\end{align}

One can then verify that, in the sense of distribution, $ (v_p, w_p, T_p) $ is a solution to the planetary geostrophic equations \eqref{sys:PGE}. This finishes the proof of theorem \ref{thm:pge-limit}.

\section{Improved regularity and the global unique strong solution to the Boussinesq system}
\label{sec:H-1-sol}

In theorem \ref{thm:uniform-weak-sol}, we have obtained a global weak solution to system \eqref{sys:BE} for small $ \varepsilon $. However, the regularity is not enough to guarantee the uniqueness for any fixed $ \varepsilon $. The goal of this section is to obtain the unique global strong solution to system \eqref{sys:BE}, equivalently system \eqref{sys:BF}--\eqref{sys:FF}, for small $ \varepsilon $. 

Taking the $ L^2(\mathbb{T}^3)$ inner product of $\eqref{sys:FF}_1 $ with $ - \Delta v_F $ and $ \eqref{sys:FF}_2 $ with $ - \varepsilon^2 \Delta w_F $, respectively, integrating by parts, and adding the resultants together leads to
\begin{equation}
    \label{h-1-est:000}
    \begin{gathered}
        \frac{1}{2}\frac{d}{dt} \norm[2]{\grad{v_F}, \varepsilon \grad{w_F}}^2 + \varepsilon^{-\alpha} \norm[2]{\Delta v_F, \varepsilon \Delta w_F}^2 = (\partial_t v_T, \Delta v_F)  + \varepsilon^2 (\partial_t w_T, \Delta w_F)  \\
         +  (v_F \cdot \grad_h{v_F},  \Delta v_F) +  (w_F\partial_z v_F, \Delta v_F) + \varepsilon^2  (v_F \cdot \grad_h{w_F}, \Delta w_F)  + \varepsilon^2 (w_F \partial_z w_F, \Delta w_F) \\
         + (v \cdot \grad_h{v_T}, \Delta v_F) + (v_T \cdot \grad_h{v_F}, \Delta v_F) + \varepsilon^2  (v \cdot \grad_h{w_T}, \Delta w_F) + \varepsilon^2 (v_T \cdot \grad_h{w_F}, \Delta w_F) \\
         + (w \partial_z v_T, \Delta v_F) +  (w_T \partial_z v_F, \Delta v_F) + \varepsilon^2  (w \partial_z w_T, \Delta w_F) + \varepsilon^2 (w_T \partial_z w_F, \Delta w_F) \\
         - \varepsilon^{2-\alpha} (\Delta w_T , \Delta w_F) =: \sum_{j=1}^{15} J_j.
    \end{gathered}
\end{equation}
Similarly, taking the $ L^2(\mathbb{T}^3)$ inner product of $ \eqref{sys:FF}_4 $ with $ - \Delta T $ and integrating by parts leads to
\begin{equation}
    \label{h-1-est:000-1}
    \begin{gathered}
        \frac{1}{2}\frac{d}{dt}\norm[2]{\grad{T}}^2 + \norm[2]{\Delta T}^2 =  (v_T \cdot \grad_h{T} + w_T \partial_z T,\Delta T)  + (v_F \cdot \grad_h{T} + w_F \partial_z T, \Delta T)  \\ -  (Q, \Delta T) =:\sum_{j=16}^{18} J_j.
    \end{gathered}
\end{equation}

Next, we will estimate the $ J_js $ for $ j = 1,2,\cdots,18 $. 

\paragraph{Estimate of $J_1$.} Applying H\"older's inequality, one has that
\begin{equation}
    \label{h-1-est:001}
    \begin{aligned}
        J_1\leq |(\partial_t v_T,\Delta v_F)| \leq \norm[2]{\partial_t v_T}\norm[2]{\Delta v_F} \overset{\eqref{est:bf-101-2}}{\lesssim} \norm[H^{-1}]{\partial_t T} \norm[2]{\Delta v_F}.
    \end{aligned}
\end{equation}
Therefore, substituting \eqref{est:tmp-103} into \eqref{h-1-est:001} and applying Young's inequality implies that, for some constant $C\in (0,\infty)$,
\begin{equation}
    \label{h-1-est:002}
    \begin{aligned}
        J_1 & \leq \frac{1}{100} \varepsilon^{-\alpha} \norm[2]{\Delta v_F}^2 + \varepsilon^\alpha C \norm[2]{Q}^2+\varepsilon^\alpha C(1+\norm[\infty]{T}^2)\norm[2]{\grad{T}}^2+\varepsilon^{2\alpha}C\norm[\infty]{T}^2(\varepsilon^{-\alpha}\norm[2]{\grad{v_F}}^2),
    \end{aligned}
\end{equation}
where we have used the estimate
\begin{equation}
    \label{h-1-est:003}
    \norm[2]{v_F,w_F} \lesssim \norm[2]{\grad{v_F}}
\end{equation}
thanks to Poincar\'e's inequality and \eqref{est:ff-incomp}.

\paragraph{Estimate of $J_2$.}
Similarly to $J_1$, applying H\"older's inequality and Young's inequality, one has that
\begin{equation}
    \label{h-1-est:004}
    % \begin{aligned}
        J_2  \leq \frac{1}{100}\varepsilon^{-\alpha}\norm[2]{\varepsilon\Delta w_F}^2 + \varepsilon^{\alpha+2} C \norm[2]{\partial_t w_T}^2
         \overset{\eqref{est:bf-101-2}}{\leq} \frac{1}{100}\varepsilon^{-\alpha}\norm[2]{\varepsilon\Delta w_F}^2 + \varepsilon^{\alpha+2} C \norm[2]{\partial_t T}^2
    % \end{aligned}
\end{equation}
for some constant $ C \in (0,\infty) $. Meanwhile, substituting the equation $\eqref{sys:FF}_{4}$ and applying H\"older's inequality, the Gagliardo-Nirenberg inequality, and Poincar\'e's inequality leads to
\begin{equation}
    \label{h-1-est:005}
    \begin{aligned}
         \norm[2]{\partial_t T}^2 & =\norm[2]{Q + \Delta T - v \cdot \grad_h{T} - w\partial_z T}^2 
         \lesssim \norm[2]{Q}^2 + \norm[2]{\Delta T}^2 + \norm[3]{v,w}^2\norm[6]{\grad{T}}^2 \\
        & \lesssim \norm[2]{Q}^2 + \norm[2]{\Delta T}^2 + (\norm[2]{v_F,w_F}\norm[2]{\grad{v_F}, \grad{w_F}}+\norm[2]{v_T,w_T}\norm[2]{\grad{v_T},\grad{w_T}}) \norm[2]{\Delta T}^2 \\
        & \overset{\eqref{est:bf-101}}{\lesssim} \norm[2]{Q}^2 + (1+\norm[2]{\grad{v_F}, \grad{w_F},\grad{T}}^2) \norm[2]{\Delta T}^2.
    \end{aligned}
\end{equation}
Therefore, substituting \eqref{h-1-est:005} into \eqref{h-1-est:004} yields
\begin{equation}
    \label{h-1-est:006}
    \begin{aligned}
        J_2 & \leq \frac{1}{100}\varepsilon^{-\alpha}\norm[2]{\varepsilon\Delta w_F}^2 + \varepsilon^\alpha C \norm[2]{Q}^2 + \varepsilon^\alpha C(1 + \norm[2]{\grad{v_F}, \varepsilon \grad{w_F},\grad{T}}^2) \norm[2]{\Delta T}^2. 
    \end{aligned}
\end{equation}

\paragraph{Estimate of $J_3$.} Applying H\"older's inequality and the Gagliardo-Nirenberg inequality, one has that
\begin{equation}
    \label{h-1-est:007}
    \begin{aligned}
        J_3 &\leq |(v_F\cdot\grad_h{v_F},\Delta v_F)| \leq \norm[6]{v_F}\norm[3]{\nabla v_F} \norm[2]{\Delta v_F}\lesssim \norm[2]{\grad{v_F}}^{3/2}\norm[2]{\Delta v_F}^{3/2}.
    \end{aligned}
\end{equation}
Therefore, applying Young's inequality in \eqref{h-1-est:007} implies that, for some constant $C\in (0,\infty)$,
\begin{equation}
    \label{h-1-est:008}
    J_3 \leq \frac{1}{100}\varepsilon^{-\alpha}\norm[2]{\Delta v_F}^2 + \varepsilon^{4\alpha} C \norm[2]{\grad{v_F}}^4(\varepsilon^{-\alpha}\norm[2]{\grad{v_F}}^2).
\end{equation}

\paragraph{Estimate of $J_4$.} Thanks to the incompressibility $ \eqref{sys:FF}_{3} $, one can write, after applying H\"older's inequality, Minkowski's inequality, and the two dimensional Gagliardo-Nirenberg inequality,
\begin{equation}
    \label{h-1-est:009}
    \begin{aligned}
    J_4 & = - \int_{\mathbb T^2} \int_0^1 \int_0^z \grad_h{}\cdot v_F(\zeta) \,d\zeta\,\partial_z v_F \cdot \Delta v_F \,dz \,dxdy \\
    & \lesssim \bigl(\int_0^1 \norm[L^4(\mathbb T^2)]{\nabla_h v_F(\zeta)}\,d\zeta\bigr)\bigl(\int_0^1 \norm[L^4(\mathbb T^2)]{\partial_z v_F(z)}\norm[L^2(\mathbb T^2)]{\Delta v_F(z)} \,dz\bigr) \\
    & \lesssim \bigl(\int_0^1 \norm[L^2(\mathbb T^2)]{\nabla v_F(\zeta)}^{1/2} \norm[L^2(\mathbb T^2)]{\Delta v_F(\zeta)}^{1/2} \,d\zeta\bigr)\bigl(\int_0^1 \norm[L^2(\mathbb T^2)]{\partial_z v_F(z)}^{1/2}\norm[L^2(\mathbb T^2)]{\Delta v_F(z)}^{3/2}\,dz\bigr).
\end{aligned}
\end{equation}
Therefore, applying H\"older's inequality in \eqref{h-1-est:009} implies that, for some constant $C\in (0,\infty)$,
\begin{equation}
    \label{h-1-est:010}
    \begin{aligned}
        J_4 & \leq C \norm[2]{\nabla v_F}^{1/2} \norm[2]{\Delta v_F}^{1/2}\norm[2]{\partial_z v_F}^{1/2}\norm[2]{\Delta v_F}^{3/2} \leq  \varepsilon^\alpha C\norm[2]{\grad{v_F}} (\varepsilon^{-\alpha}\norm[2]{\Delta v_F}^2).
    \end{aligned}
\end{equation}

\paragraph{Estimate of $J_5$.} Applying H\"older's inequality and the Gagliardo-Nirenberg inequality, one has that
\begin{equation}
    \label{h-1-est:011}
    \begin{aligned}
        J_5 &\leq \varepsilon^2|(v_F\cdot\grad_h{}w_F,\Delta w_F)|\leq \varepsilon^2 \norm[6]{v_F}\norm[3]{\grad_h{w_F}}\norm[2]{\Delta w_F} \\
        &\lesssim \varepsilon^2 \norm[2]{\grad{v_F}} \norm[2]{\grad_h{w_F}}^{1/2} \norm[2]{\Delta w_F}^{3/2}.
    \end{aligned}
\end{equation} 
Using the fact that $ \norm[2]{\grad_h{w_F}} \leq \norm[2]{\nabla^2 v_F} $ in \eqref{h-1-est:011} implies that, for some constant $C\in (0,\infty)$,
\begin{equation}
    \label{h-1-est:012}
    J_5 \leq \varepsilon^{\alpha+1/2}C\norm[2]{\grad{v_F}} (\varepsilon^{-\alpha} \norm[2]{\Delta v_F}^{1/2} \norm[2]{\varepsilon \Delta w_F}^{3/2})\leq  \varepsilon^{\alpha+1/2}C\norm[2]{\grad{v_F}} (\varepsilon^{-\alpha}\norm[2]{\Delta v_F, \varepsilon \Delta w_F}^2).
\end{equation}

\paragraph{Estimate of $J_6$.} Thanks to the incompressibility $ \eqref{sys:FF}_3 $, one can write, after applying H\"older's inequality and the Gagliardo-Nirenberg inequality,
\begin{equation}
    \label{h-1-est:013}
    \begin{aligned}
        J_6 & = - \varepsilon^2 (w_F \grad_h{}\cdot v_F, \Delta w_F) \leq \varepsilon^2 \norm[6]{w_F} \norm[3]{\nabla v_F} \norm[2]{\Delta w_F} \\
        & \lesssim \varepsilon^2 \norm[2]{\grad{w_F}} \norm[2]{\nabla v_F}^{1/2} \norm[2]{\Delta v_F}^{1/2} \norm[2]{\Delta w_F}.
    \end{aligned}
\end{equation}
Therefore, applying Young's inequality in \eqref{h-1-est:013} implies that, for some constant $ C \in (0,\infty) $,
\begin{equation}
    \label{h-1-est:014}
    J_6 \leq \frac{1}{100} \varepsilon^{-\alpha}\norm[2]{\Delta v_F, \varepsilon \Delta w_F}^2 + \varepsilon^{4\alpha} C \norm[2]{\varepsilon\grad{w_F}}^4 (\varepsilon^{-\alpha}\norm[2]{\nabla v_F}^2).
\end{equation}

\paragraph{Estimate of $J_7$.} Similarly as before, after applying H\"older's inequality and the Gagliardo-Nirenberg inequality, one has that
\begin{equation}
    \label{h-1-est:015}
    \begin{aligned}
        J_7 & \leq |(v_T\cdot\grad_h{v_T},\Delta v_F)|+|(v_F\cdot\grad_h{v_T},\Delta v_F)|\\
        &\leq \norm[6]{v_T} \norm[3]{\nabla v_T} \norm[2]{\Delta v_F} +\norm[6]{v_F} \norm[3]{\nabla v_T} \norm[2]{\Delta v_F}\\
        & \lesssim \norm[2]{\grad{v_T}}^{3/2} \norm[2]{\grad^2{v_T}}^{1/2} \norm[2]{\Delta v_F} + \norm[2]{\grad{v_F}} \norm[2]{\grad{v_T}}^{1/2}\norm[2]{\grad^2{v_T}}^{1/2} \norm[2]{\Delta v_F} \\
        & \overset{\eqref{est:bf-101}}{\lesssim} \norm[2]{T}^{3/2}\norm[2]{\grad{T}}^{1/2} \norm[2]{\Delta v_F}+\norm[2]{\grad{v_F}} \norm[2]{T}^{1/2}\norm[2]{\grad{T}}^{1/2} \norm[2]{\Delta v_F}.
    \end{aligned}
\end{equation}
Therefore, applying Poincar\'e's inequality and Young's inequality in \eqref{h-1-est:015} implies that, for some constant $C\in (0,\infty)$,
\begin{equation}
    \label{h-1-est:016}
    \begin{aligned}
        J_7 \leq \frac{1}{100} \varepsilon^{-\alpha}\norm[2]{\Delta v_F}^2 + \varepsilon^\alpha C \norm[2]{\grad{v_F},\grad{T}}^2\norm[2]{\grad{T}}^2.
    \end{aligned}
\end{equation}

\paragraph{Estimate of $J_8$.} Applying H\"older's inequality and the Gagliardo-Nirenberg inequality, one has that 
\begin{equation}
    \label{h-1-est:017}
    \begin{aligned}
        J_8 & \leq |(v_T\cdot\grad_h{v_F},\Delta v_F)|\leq \norm[6]{v_T} \norm[3]{\nabla v_F}\norm[2]{\Delta v_F}\\
        &\lesssim \norm[2]{\grad{v_T}} \norm[2]{\nabla v_F}^{1/2} \norm[2]{\Delta v_F}^{3/2}
        \overset{\eqref{est:bf-101}}{\lesssim} \norm[2]{T} \norm[2]{\nabla v_F}^{1/2} \norm[2]{\Delta v_F}^{3/2}.
    \end{aligned}
\end{equation}
Therefore, applying Poincar\'e's inequality and Young's inequality in \eqref{h-1-est:017} implies that, for some constant $ C \in (0,\infty) $,
\begin{equation}
    \label{h-1-est:018}
    J_8 \leq \frac{1}{100} \varepsilon^{-\alpha }\norm[2]{\Delta v_F}^2 + \varepsilon^{3\alpha} C \norm[2]{\nabla v_F}^2\norm[2]{\grad{T}}^4.
\end{equation}

\paragraph{Estimate of $J_9$.}
Similarly as before, after applying H\"older's inequality and the Gagliardo-Nirenberg inequality, one has that
\begin{equation}
    \label{h-1-est:019}
    \begin{aligned}
    J_9 & \leq \varepsilon^2|(v_T\cdot\grad_h{w_T},\Delta w_F)|+\varepsilon^2|(v_F\cdot\grad_h{w_T},\Delta w_F)|\\
    &\leq \varepsilon^2 \norm[6]{v_T} \norm[3]{\nabla w_T} \norm[2]{\Delta w_F} + \varepsilon^2 \norm[6]{v_F} \norm[3]{\nabla w_T} \norm[2]{\Delta w_F} \\
    & \lesssim \varepsilon^2 \norm[2]{\grad{v_T}} \norm[2]{\nabla w_T}^{1/2} \norm[2]{\Delta w_T}^{1/2} \norm[2]{\Delta w_F} + \varepsilon^2 \norm[2]{\grad{v_F}} \norm[2]{\nabla w_T}^{1/2} \norm[2]{\Delta w_T}^{1/2} \norm[2]{\Delta w_F} \\
    & \overset{\eqref{est:bf-101}}{\lesssim} \varepsilon^2\norm[2]{T} \norm[2]{\grad{T}}^{1/2} \norm[2]{\Delta T}^{1/2} \norm[2]{\Delta w_F} + \varepsilon^2 \norm[2]{\grad{v_F}} \norm[2]{\grad{T}}^{1/2} \norm[2]{\Delta T}^{1/2} \norm[2]{\Delta w_F}.
\end{aligned}
\end{equation}
Therefore, applying Poincar\'e's inequality and Young's inequality in \eqref{h-1-est:019} implies that, for some constant $C\in (0,\infty)$, 
\begin{equation}
    \label{h-1-est:020}
    J_9 \leq \frac{1}{100}(\varepsilon^{-\alpha}\norm[2]{\varepsilon \Delta w_F}^2 + \norm[2]{\Delta T}^2) + \varepsilon^{2\alpha + 4} C \norm[2]{\grad{v_F},\grad{T}}^4 \norm[2]{\grad{T}}^2.
\end{equation}

\paragraph{Estimate of $J_{10}$.} Applying H\"older's inequality and the Gagliardo-Nirenberg inequality, one has that
\begin{equation}
    \label{h-1-est:021}
    \begin{aligned}
        J_{10} & \leq \varepsilon^2|(v_T\cdot\grad_h{w_F},\Delta w_F)|\leq \varepsilon^2 \norm[6]{v_T}\norm[3]{\nabla w_F}\norm[2]{\Delta w_F} \\
        &\lesssim \varepsilon^2 \norm[2]{\grad{v_T}} \norm[2]{\nabla w_F}^{1/2} \norm[2]{\Delta w_F}^{3/2} \overset{\eqref{est:bf-101}}{\lesssim} \varepsilon^2 \norm[2]{T} \norm[2]{\nabla w_F}^{1/2} \norm[2]{\Delta w_F}^{3/2}.
    \end{aligned}
\end{equation}
Therefore, applying Poincar\'e's inequality and Young's inequality in \eqref{h-1-est:021} implies that, for some constant $ C \in (0,\infty) $, 
\begin{equation}
    \label{h-1-est:022}
    J_{10} \leq \frac{1}{100} \varepsilon^{-\alpha} \norm[2]{\varepsilon \Delta w_F}^2 + \varepsilon^{3\alpha} C \norm[2]{\varepsilon \nabla w_F}^2\norm[2]{\grad{T}}^4.
\end{equation}

\paragraph{Estimate of $J_{11}$ and $J_{12}$.} Using \eqref{est:ff-009} and \eqref{wt-bound-vt-0}, one can write, after applying H\"older's inequality, Minkowski's inequality, and the two dimensional Gagliardo-Nirenberg inequality, 
\begin{equation}
    \label{h-1-est:023}
    \begin{aligned}
        J_{11} + J_{12} & = - \int_{\mathbb T^2} \int_0^1 \int_0^z \grad_h{}\cdot v(\zeta)\,d\zeta\,\partial_z v_T \cdot \Delta v_F \,dz \,dxdy \\
        & \qquad  - \int_{\mathbb T^2} \int_0^1 \int_0^z \grad_h{}\cdot v_T(\zeta)\,d\zeta\, \partial_z v_F \cdot \Delta v_F \,dz \,dxdy \\
        & \lesssim \bigl(\int_0^1 \norm[L^4(\mathbb T^2)]{\grad_h{v_T(\zeta)},\grad_h{v_F(\zeta)}}\,d\zeta\bigr)\bigl(\int_0^1 \norm[L^4(\mathbb T^2)]{\partial_z v_T(z)}\norm[L^2(\mathbb T^2)]{\Delta v_F(z)}\,dz\bigr) \\
        & \qquad + \bigl(\int_0^1 \norm[L^4(\mathbb T^2)]{\grad_h{v_T(\zeta)}}\,d\zeta\bigr)\bigl(\int_0^1 \norm[L^4(\mathbb T^2)]{\partial_z v_F(z)}\norm[L^2(\mathbb T^2)]{\Delta v_F(z)} \,dz\bigr) \\
        & \lesssim \int_0^1 \norm[L^2(\mathbb T^2)]{\grad_h{v_T(\zeta)},\grad_h{v_F(\zeta)}}^{1/2}\norm[L^2(\mathbb T^2)]{\grad_h^2{v_T(\zeta)},\grad_h^2{v_F(\zeta)}}^{1/2} \,d\zeta \\
        & \qquad \qquad \times  \int_0^1 \norm[L^2(\mathbb T^2)]{\partial_z v_T(z)}^{1/2}\norm[L^2(\mathbb T^2)]{\grad_h{\partial_z v_T(z)}}^{1/2} \norm[L^2(\mathbb T^2)]{\Delta v_F(z)} \,dz \\
        & \qquad + \int_0^1 \norm[L^2(\mathbb T^2)]{\grad_h{v_T(\zeta)}}^{1/2}\norm[L^2(\mathbb T^2)]{\grad_h^2{v_T(\zeta)}}^{1/2} \,d\zeta \\
        & \qquad \qquad \times  \int_0^1 \norm[L^2(\mathbb T^2)]{\partial_z v_F(z)}^{1/2}\norm[L^2(\mathbb T^2)]{\grad_h{\partial_z v_F(z)}}^{1/2} \norm[L^2(\mathbb T^2)]{\Delta v_F(z)} \,dz \\
        & \lesssim \norm[2]{\grad{v_T}, \grad{v_F}} \norm[2]{\Delta v_T} \norm[2]{\Delta v_F}+\norm[2]{\grad{v_T}, \grad{v_F}}\norm[2]{\Delta v_F}^{1/2} \norm[2]{\Delta v_T}^{1/2} \norm[2]{\Delta v_F}\\
        & \overset{\eqref{est:bf-101}}{\lesssim} \norm[2]{\grad{v_F},T}\norm[2]{\grad{T}}\norm[2]{\Delta v_F}+\norm[2]{\grad{v_F},T}\norm[2]{\grad{T}}^{1/2} \norm[2]{\Delta v_F}^{3/2}.
    \end{aligned}
\end{equation}
Therefore, applying Poincar\'e's inequality and Young's inequality in \eqref{h-1-est:023} implies that, for some constant $C\in (0,\infty)$,
\begin{equation}
    \label{h-1-est:024}
    J_{11} + J_{12} \leq \frac{1}{100} \varepsilon^{-\alpha} \norm[2]{\Delta v_F}^2 + \varepsilon^\alpha C \norm[2]{\grad{v_F},\grad{T}}^2 \norm[2]{\grad{T}}^2+\varepsilon^{3\alpha} C \norm[2]{\grad{v_F},\grad{T}}^4 \norm[2]{\grad{T}}^2.
\end{equation}

\paragraph{Estimate of $J_{13}$ and $J_{14}$.} Using \eqref{wt-bound-vt-0} and thanks to the incompressibility $\eqref{sys:FF}_3$, one can write, after applying H\"older's inequality and the Gagliardo-Nirenberg inequality,
\begin{equation}
    \label{h-1-est:025}
    \begin{aligned}
        J_{13} + J_{14} & = - \varepsilon^2  (w \grad_h{}\cdot v_T, \Delta w_F) - \varepsilon^2  (w_T \grad_h{}\cdot v_F, \Delta w_F) \\
        & \leq \varepsilon^2 \norm[3]{w} \norm[6]{\nabla v_T} \norm[2]{\Delta w_F} + \varepsilon^2 \norm[6]{w_T} \norm[3]{\nabla v_F} \norm[2]{\Delta w_F} \\
        & \lesssim \varepsilon^2 (\norm[2]{w_T, w_F}^{1/2} \norm[2]{\grad{w_T},\grad{w_F}}^{1/2} \norm[2]{\Delta v_T} + \norm[2]{\grad{w_T}} \norm[2]{\nabla v_F}^{1/2} \norm[2]{\Delta v_F}^{1/2}) \norm[2]{\Delta w_F} \\
        & \overset{\eqref{est:bf-101}}{\lesssim} \varepsilon^2 (\norm[2]{w_F,T}^{1/2}\norm[2]{\grad{w_F},\grad{T}}^{1/2}\norm[2]{\grad{T}}+\norm[2]{\grad{T}}\norm[2]{\grad{v_F}}^{1/2}\norm[2]{\Delta v_F}^{1/2}) \norm[2]{\Delta w_F}.
    \end{aligned}
\end{equation}
Therefore, applying Poincar\'e's inequality and Young's inequality in \eqref{h-1-est:025} implies that, for some constant $C\in (0,\infty)$,
\begin{equation}
    \label{h-1-est:026}
    J_{13} + J_{14} \leq \frac{1}{100} \varepsilon^{-\alpha}\norm[2]{\Delta v_F,\varepsilon \Delta w_F}^2 + \varepsilon^{\alpha} C \norm[2]{\varepsilon \grad{w_F}, \grad{T}}^2 \norm[2]{\grad{T}}^2 + \varepsilon^{3\alpha+4} C\norm[2]{\grad{v_F}}^2 \norm[2]{\grad{T}}^4.
\end{equation} 

\paragraph{Estimate of $J_{15}$.} Applying H\"older's inequality, one has that
\begin{equation}
    \label{h-1-est:027}
        J_{15} \leq \varepsilon^{2-\alpha}|(\Delta w_T, \Delta w_F)|\leq \varepsilon^{2-\alpha}\norm[2]{\Delta w_T}\norm[2]{\Delta w_F}\overset{\eqref{est:bf-101}}{\lesssim}\varepsilon^{2-\alpha}\norm[2]{\Delta T}\norm[2]{\Delta w_F}.
\end{equation}
Therefore, applying Young's inequality in \eqref{h-1-est:027} implies that, for some constant $C\in (0,\infty)$,
\begin{equation}
    \label{h-1-est:027-1}
    J_{15}\leq \frac{1}{100} \varepsilon^{-\alpha}\norm[2]{\varepsilon \Delta w_F}^2 + \varepsilon^{2-\alpha}C \norm[2]{\Delta T}^2.
\end{equation}

\paragraph{Estimate of $J_{16}$.} 
Using \eqref{wt-bound-vt-0}, one can write
\begin{equation}
    \label{h-1-est:028}
    J_{16} =  (v_T \cdot \grad_h{T}, \Delta T)  - \int_{\mathbb{T}^2}\int_0^1 \int_0^z \grad_h{} \cdot v_T(\zeta) \,d\zeta\,\partial_z T \Delta T \,dz\,dxdy =: J_{16,1} + J_{16,2}.
\end{equation}
Applying H\"older's inequality and the Gagliardo-Nirenberg inequality, one has that
\begin{equation}
    \label{h-1-est:029}
    \begin{aligned}
        J_{16,1} & \leq \norm[6]{v_T}\norm[3]{\nabla T}  \norm[2]{\Delta T} \lesssim \norm[2]{\grad{v_T}} \norm[2]{\grad{T}}^{1/2} \norm[2]{\Delta T}^{3/2}\overset{\eqref{est:bf-101}}{\lesssim} \norm[2]{T} \norm[2]{\grad{T}}^{1/2} \norm[2]{\Delta T}^{3/2},
    \end{aligned}
\end{equation}
and, additionally applying Minkowski's inequality, 
\begin{equation}
    \label{h-1-est:030}
    \begin{aligned}
        J_{16,2} & \leq \bigl(\int_0^1 \norm[L^4(\mathbb T^2)]{\grad_h{v_T(\zeta)}}\,d\zeta\bigr)\bigl(\int_0^1 \norm[L^4(\mathbb T^2)]{\partial_z T(z)}\norm[L^2(\mathbb T^2)]{\Delta T(z)} \,dz\bigr)  \\
        & \lesssim \int_0^1 \norm[L^2(\mathbb T^2)]{\grad_h{v_T(\zeta)}}^{1/2} \norm[L^2(\mathbb T^2)]{\grad_h^2{v_T(\zeta)}}^{1/2}\,d\zeta\\
        & \qquad \qquad \times\int_0^1 \norm[L^2(\mathbb T^2)]{\partial_z T(z)}^{1/2} \norm[L^2(\mathbb T^2)]{\grad_h{\partial_z T(z)}}^{1/2} \norm[L^2(\mathbb T^2)]{\Delta T(z)} \,dz \\
        & \lesssim \norm[2]{\nabla v_T}^{1/2}\norm[2]{\nabla^2 v_T}^{1/2} \norm[2]{\partial_z T}^{1/2} \norm[2]{\Delta T}^{3/2} \overset{\eqref{est:bf-101}}{\lesssim} \norm[2]{T}^{1/2} \norm[2]{\grad{T}} \norm[2]{\Delta T}^{3/2}.
    \end{aligned}
\end{equation}
Therefore, applying Poincar\'e's inequality in \eqref{h-1-est:029} and Young's inequality in \eqref{h-1-est:029} and \eqref{h-1-est:030} implies that, for some constant $ C \in (0,\infty) $, 
\begin{equation}
    \label{h-1-est:031}
    J_{16} \leq \frac{1}{100}\norm[2]{\Delta T}^2 + C \norm[2]{T}^2 \norm[2]{\grad{T}}^4.
\end{equation}

\paragraph{Estimate of $J_{17}$.} Similarly to $J_{16}$, applying H\"older's inequality, Minkowski's inequality, and the Gagliardo-Nirenberg inequality, one has that
\begin{equation}
    \label{h-1-est:032}
    \begin{aligned}
        J_{17} & \lesssim \norm[2]{\grad{v_F}}\norm[2]{\grad{T}}^{1/2} \norm[2]{\Delta T}^{3/2} + \norm[2]{\nabla v_F}^{1/2} \norm[2]{\Delta v_F}^{1/2} \norm[2]{\partial_z T}^{1/2} \norm[2]{\Delta T}^{3/2}.
    \end{aligned}
\end{equation}
Therefore, applying Young's inequality in \eqref{h-1-est:032} implies that, for some constant $ C \in (0,\infty) $,
\begin{equation}
    \label{h-1-est:033}
    J_{17} \leq \frac{1}{100}\norm[2]{\Delta T}^2 + \varepsilon^\alpha C \norm[2]{\grad{v_F}}^2\norm[2]{\grad{T}}^2(\varepsilon^{-\alpha}\norm[2]{\grad{v_F}}^2) + C \norm[2]{\grad{v_F}}^2 \norm[2]{\grad{T}}^2 \norm[2]{\Delta v_F}^2.
\end{equation}

\paragraph{Estimate of $J_{18}$.} Applying H\"older's inequality and Young's inequality implies that, for some constant $C\in (0,\infty)$, 
\begin{equation}
    \label{h-1-est:034}
    J_{18} \leq |(Q,\Delta T)|\leq \norm[2]{Q}\norm[2]{\Delta T}\leq \frac{1}{100}\norm[2]{\Delta T}^2 + C \norm[2]{Q}^2.
\end{equation}

\paragraph{Summary of $H^1$ estimates.} 

Denote by
\begin{equation}
    \label{def:h-1-energy}
    \mathcal E_1(t):= \norm[2]{\grad{v_F(t)},\varepsilon \grad{w_F(t)},\grad{T}}^2,
\end{equation}
and
\begin{equation}
\label{def:h-1-dissipation}
    \mathcal D_1(t) := \varepsilon^{-\alpha}\norm[2]{\Delta v_F(t), \varepsilon \Delta w_F(t)}^2 + \norm[2]{\Delta T(t)}^2.
\end{equation}
% In addition, 
% \begin{equation}
%     \label{def:total_energy_dissipation}
%     \mathcal E_1 := \mathcal E_{1,m} + \mathcal E_{1,t},\qquad \mathcal D_1 := \mathcal D_{1,m} + \mathcal D_{1,t}.
% \end{equation}
Collecting \eqref{h-1-est:000}--\eqref{h-1-est:034} and applying Young's inequality, we can conclude that, for some constant $C\in (0,\infty)$,
\begin{equation}
    \label{h-1-est:101}
    \begin{gathered} 
        \dfrac{d}{dt}\mathcal E_1(t) + \mathcal D_1(t) \leq 
        \varepsilon^\alpha C (1 + \mathcal E_1^2(t)) \mathcal D_1(t) + \varepsilon^{2-\alpha} C \mathcal D_1(t) 
        %+ \varepsilon^\alpha C (1 + \norm[\infty]{T}^2) \norm[2]{\nabla_3 T}^2 
        \\
        + \varepsilon^{\alpha} C (1+\norm[\infty]{T}^2 + \mathcal E_1^2(t)) ( \varepsilon^{-\alpha} \norm[2]{\grad{v_F},\varepsilon \grad{w_F}}^2 + \norm[2]{\grad{T}}^2) \\
        + C \norm[2]{T}^2 \norm[2]{\grad{T}}^2 \mathcal E_1(t) +  C \norm[2]{Q}^2.
    \end{gathered}
\end{equation}
Finally, applying Gr\"onwall's inequality in \eqref{h-1-est:101} implies that, for some $\mathfrak c_2\in (0,\infty)$, independent of $\varepsilon$ and the solution,
\begin{equation}
    \label{h-1-est:102}
    \begin{gathered}
    \mathcal E_1(t) + \int_0^t \mathcal D_1(s)\,ds \leq B(t)  \mathcal E_1(0) + \varepsilon^{\min\lbrace \alpha,2-\alpha\rbrace} \mathfrak c_2 B(t) \bigl(1 + \sup_{0\leq s \leq t}\mathcal E_1^2(s)\bigr) \int_0^t \mathcal D_1(s)\,ds \\
        + \varepsilon^\alpha \mathfrak c_2 B(t) \sup_{0\leq s\leq t}\bigl(1+\norm[\infty]{T(s)}^2 + \mathcal E_1^2(s)\bigr)\int_0^t (\varepsilon^{-\alpha} \norm[2]{\grad{v_F(s)},\varepsilon \grad{w_F(s)}}^2 + \norm[2]{\grad{T(s)}}^2) \,ds \\
         + \mathfrak c_2 B(t) \int_0^t \norm[2]{Q(s)}^2 \,ds, 
\end{gathered}
\end{equation}
where 
\begin{equation}
    \label{h-1-est:103}
    B(t):= e^{\mathfrak c_2 \int_0^t \Vert{T(s)}\Vert_{2}^2 \Vert{\grad{T(s)}}\Vert_{2}^2 \,ds}.
\end{equation}
Thanks to \eqref{est:unifrom-in-varepsilon-spatial}, one can conclude from \eqref{h-1-est:102}--\eqref{h-1-est:103} that, for $ 0 < \alpha < 2 $, there exists $ \varepsilon_1 \in (0,\varepsilon_0) $ and some constant $ C \in (0,\infty) $, both independent of $\varepsilon$ and depending only the initial data and $Q$, such that for all $ \varepsilon \in (0,\varepsilon_1) $, 
\begin{equation}
    \label{h-1-est:104}
    \sup_{0\leq t < \infty} \norm[H^1]{v_F,\varepsilon w_F, T}^2 + \int_0^\infty \norm[H^2]{\varepsilon^{-\alpha/2}v_F(t),\varepsilon^{1-\alpha/2}w_F(t),T(t)}^2 \,dt < C < \infty. 
\end{equation}
Together with \eqref{est:bf-101}, this finishes the proof of \eqref{thmest:strong-global-sol}. To finish the proof of theorem \ref{thm:global-strong}, one establishes the uniqueness of the strong solution by repeating the arguments in \cite{constantin_navier-stokes_1989}, which is standard and omitted here. 

\section*{Acknowledgments}
The authors would like to thank Professor Edriss Titi for bringing the problem to their attention.

\bibliographystyle{plain} % We choose the "plain" reference style
\bibliography{references.bib}

\end{document}